\long\def\symbolfootnote[#1]#2{\begingroup%
\def\thefootnote{\fnsymbol{footnote}}\footnote[#1]{#2}\endgroup}
\newcommand{\D}{\ensuremath{\mathcal{D}}}
\newcommand{\C}{\ensuremath{\mathcal{C}}}
\newcommand{\Z}{\ensuremath{\mathbb{Z}}}
\newcommand{\g}{\textup{Gal}}
\newcommand{\h}{\textup{Hol}}
\newcommand{\p}{\textup{Perm}}
\newcommand{\au}{\textup{Aut}}
\newcommand{\cy}{\mathfrak{C}}
\newcommand{\di}{\mathfrak{D}}
\newcommand{\mi}{\mathfrak{M}}
\newcommand{\ra}{\mathfrak{R}}
\def\imod#1{\allowbreak\mkern10mu({\operator@font mod}\,\,#1)}
\renewcommand*\env@matrix[1][*\c@MaxMatrixCols c]{%
  \hskip -\arraycolsep
  \let\@ifnextchar\new@ifnextchar
  \array{#1}}
\newtheorem{theorem}{Theorem}[section]
\newtheorem{lemma}[theorem]{Lemma}
\newtheorem{corollary}[theorem]{Corollary}
\newtheorem{proposition}[theorem]{Proposition}
\newtheorem*{theorem*}{Theorem}
\theoremstyle{definition}
\newtheorem{definition}[theorem]{Definition}
\newtheorem{remark}[theorem]{Remark}
\numberwithin{equation}{section}
\newcommand{\ignore}[1]{}
\newcommand{\mynote}[1]{}
\begin{document}
\setcounter{section}{0}
% document information
\title{On $\mathbb{Z}_N\rtimes\mathbb{Z}_2$-Hopf-Galois structures}
\author{Namrata Arvind, Saikat Panja}
\email{namchey@gmail.com, panjasaikat300@gmail.com}
\address{IISER Pune, Dr. Homi Bhabha Road, Pashan, Pune 411 008, India}
\thanks{The first named author is partially supported by the IISER Pune research fellowship and the second author has been partially supported by supported by NBHM fellowship.}
\date{August 21, 2020}
\subjclass[2020]{12F10, 16T05.}
\keywords{Hopf-Galois structures; Field extensions; Holomorph}
\setcounter{tocdepth}{1}
%%%%%%%%%%%%%%%%%%%%%%%%

\begin{abstract}
Let $K/F$ be a finite Galois extension of fields with $\textup{Gal}(K/F)=\Gamma$. In an earlier work of Timothy Kohl, the author
enumerated dihedral Hopf-Galois structures acting on dihedral extensions. Dihedral group is one particular 
example of semidirect product of $\mathbb{Z}_n$ and $\mathbb{Z}_2$. In this article we count the number of 
Hopf-Galois structures 
with Galois group $\Gamma$ of type $G$, where $\Gamma,G$ are groups of the form 
$\mathbb{Z}_N\rtimes_\phi\mathbb{Z}_2$ when $N$ is odd with radical of $N$ being a Burnside number. As an 
application we also find the corresponding number of skew braces.
\end{abstract}
\maketitle
%%%%%%%%%%%%%%%%%%%%%%%%
%\tableofcontents
\section{Introduction}
\subsection{Hopf-Galois structures} Let $\mathcal{R}$ be a commutative ring with unity and let $\mathcal{H}$
be an $\mathcal{R}$-bialgebra. Then $\mathcal{H}$ will be called an \textit{$\mathcal{R}$-Hopf algebra} if there is an 
$\mathcal{R}$-module homomoprhism $\lambda:\mathcal{H}\rightarrow \mathcal{H}$ (the antipode map), which is 
both an $\mathcal{R}$-algebra and an $\mathcal{R}$-coalgebra antihomomophism such that:
\begin{align*}
    \lambda(h\otimes h')&=\lambda(h)\otimes \lambda(h')\\
    \Delta\lambda(h)&=(\lambda\otimes \lambda)\tau\Delta,
\end{align*}
where $\Delta$ is the comultiplication map and $\tau$ is the switch map $\tau(h_1\otimes h_2)=h_2\otimes h_1$. Now assume that $\mathcal{H}$ is commutative. 
An $\mathcal{R}$-Hopf algebra $\mathcal{H}$ is called a \textit{finite algebra} if it is finitely generated and a projective $\mathcal{R}$-module. Now if $\mathcal{S}$ is an $\mathcal{R}$-algebra which is an $\mathcal{H}$-module, then $\mathcal{S}$ is called an \textit{$\mathcal{H}$-module algebra} if
\begin{align*}
    h(st)=\sum h_{(1)}(s)h_{(2)}(t)\text{ and }h(1)=\epsilon(h)1
\end{align*}
for all $h\in \mathcal{H},s,t\in\mathcal{S}$, where $\Delta(h)=\sum\limits_{(h)}h_{(1)}\otimes h_{(2)}\in\mathcal{H}\otimes\mathcal{H}$ according to Sweedler's (\cite{s}) 
notation and $\epsilon:\mathcal{H}\rightarrow\mathcal{R}$ is the co-unit map.

Then $\mathcal{S}$, a finite commutative $\mathcal{R}$-algebra 
is called an \textit{$\mathcal{H}$-Galois extension} over $\mathcal{R}$ if $\mathcal{S}$ is a left $\mathcal{H}$-module algebra
and the $\mathcal{R}$-module homomorphism
\begin{align*}
    j:\mathcal{S}\otimes_\mathcal{R}\mathcal{H}\rightarrow\textup{End}_\mathcal{R}(\mathcal{S}),
\end{align*}
given by $j(s\otimes h)(s')=sh(s')$ for $s,s'\in\mathcal{S},h\in\mathcal{H}$, is an isomorphism. 
Now we define a Hopf-Galois structure on a separable field extension. Assume $K/F$ is a finite separable field extension. An $F$-Hopf algebra $\mathcal{H}$, with an action on $K$ such that $K$ is an $H$-module algebra
and the action makes $K$ into an $\mathcal{H}$-Galois extension, will be called a \textit{Hopf-Galois structure}
on $K/F$.
\subsection{Greither-Pareigis theory \cite{gp} and Boytt's translation \cite{b}}
Given a group $G$ we define $Holomorph$ of $G$ as a semidirect product $G\rtimes_{\psi} \au(G)$, where $\psi$ is the identity map. Holomorph of a group $G$ (denoted by $\h(G)$) sits inside $\p(G)$ (set of permutations on $G$) as follows
\begin{equation*}
    \h(G) = \{\eta \in \p(G): \eta \text{ normalizes }   \lambda(G)\}
\end{equation*}
where $\lambda$ is the left regular representation. Now we state some results which will help us count the number of Hopf-Galois structures on a given field extension. The following result is due to \cite{gp}.
\begin{lemma}{\cite[Theorem 6.8]{c}}\label{l002}
Let $K/F$ be a separable extension of fields. Let $L$ be its normal closure, $\Gamma=\g(L/F),\Gamma'=\g(L/K)$. Define the set $X=\Gamma/{\Gamma'}$. Then there is a bijection between Hopf-Galois structure on $K/F$ and regular subgroups $G$ of $\p(X)$ normalized by $\lambda(\Gamma)$ where $\lambda$ is the left regular representation.
\end{lemma}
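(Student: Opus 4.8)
The plan is to prove the statement by Galois descent, reducing the classification of Hopf-Galois structures over $F$ to a purely combinatorial classification over the splitting field $L$. The guiding idea is that after base change to $L$ the extension $K/F$ splits completely, that a Hopf-Galois structure on the resulting split algebra is nothing more than a regular subgroup of $\p(X)$, and that the data needed to descend such a structure back to $F$ is exactly a compatibility with the $\Gamma$-action, which is recorded by the normalization condition.

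First I would identify the base-changed algebra. Since $K/F$ is separable with normal closure $L$, the $L$-algebra $K\otimes_F L$ is \'etale and splits: there is an $L$-algebra isomorphism $K\otimes_F L\cong\mathrm{Map}(X,L)$, where $X=\Gamma/\Gamma'$ is identified with the set of $F$-embeddings $K\hookrightarrow L$. Under this identification $\Gamma$ acts in two compatible ways: semilinearly on $K\otimes_F L$ through the second factor, and on $X$ by left translation, that is, through $\lambda(\Gamma)\subseteq\p(X)$.

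Next I would classify the base-changed structures and then descend. Given a Hopf-Galois structure $\mathcal{H}$ on $K/F$, the $L$-Hopf algebra $\mathcal{H}\otimes_F L$ acts on $\mathrm{Map}(X,L)$; its grouplike elements form a finite group $G$ with $|G|=|X|$, and since grouplikes act by $L$-algebra automorphisms and $\au_L(\mathrm{Map}(X,L))=\p(X)$, this realizes $G$ as a subgroup of $\p(X)$. The Hopf-Galois (isomorphism) condition forces the $G$-action on $X$ to be simply transitive, so $G$ is \emph{regular}. Conversely, from a regular $G\le\p(X)$ one forms the $L$-Hopf algebra $L[G]$ acting on $\mathrm{Map}(X,L)$ through the permutation action and verifies the Galois condition directly using regularity. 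To pass between $F$ and $L$ I would invoke Galois descent: a structure over $F$ is the same as one over $L$ equipped with a $\Gamma$-semilinear descent datum compatible with the Galois action on $L$ and with the translation action of $\Gamma$ on $X$. This descent datum permutes the grouplikes and hence acts on $G$ by conjugation inside $\p(X)$, and its compatibility with left translation says exactly that $\lambda(\Gamma)$ normalizes $G$. Thus the $F$-structures correspond bijectively to regular subgroups of $\p(X)$ normalized by $\lambda(\Gamma)$.

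I expect the descent step to be the main obstacle. The delicate points are, first, verifying that the Galois isomorphism condition really does force regularity of the $G$-action on $X$, and second, making the equivalence between $F$-structures and $\Gamma$-equivariant $L$-structures precise, so that the abstract descent datum translates cleanly into the group-theoretic condition $\lambda(\Gamma)\subseteq N_{\p(X)}(G)$. Everything else reduces to bookkeeping about grouplikes, about the idempotents of the split algebra $\mathrm{Map}(X,L)$, and about faithfully flat descent.
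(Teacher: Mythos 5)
Your outline is correct and is essentially the Greither--Pareigis argument itself: base change to the normal closure $L$ to split $K\otimes_F L\cong\mathrm{Map}(X,L)$, identify $L$-Hopf--Galois structures on the split algebra with regular subgroups of $\p(X)$, and translate the Galois-descent datum into the condition that $\lambda(\Gamma)$ normalizes $G$. Note that the paper gives no proof of this lemma at all --- it quotes the result from \cite[Theorem 6.8]{c} (originally \cite{gp}) --- and the proof in that cited source proceeds by exactly the descent strategy you describe.
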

In the proof of the above lemma, given a regular subgroup $G\leq \p (X)$ normalized by $\lambda(\Gamma)$, the Hopf-Galois structure on $K/F$ corresponding to $G$ is $K[G]^{\Gamma}$. 
Here $\Gamma$ acts on $G$ by conjugation inside $\p(X)$ and it acts on $K$ by field automorphism, which induces an action of $\Gamma$ on $K[G]$. This $G$ is called the \textit{type} of the Hopf-Galois extension.
Although Greither-Pareigis theory simplifies the problem of counting the number of
Hopf-Galois structure for a given separable extension, the size of $\p(X)$ is large ($|X|!$) in general.
The next lemma (also known as Boytt's translation) further simplifies the problem by considering regular embedding in $\h(X)$, which is comparatively smaller in size. 
From the proof of {\cite[Proposition 1]{b}} we have the following:

Let $\Gamma$ be a finite group and $\Gamma'\subseteq\Gamma$ be a subgroup. Let $X=\Gamma/\Gamma'$ and $G$ be group of order $|X|$.
Then there is a bijection between the following sets:
\begin{enumerate}
    \item $\{\alpha:G\rightarrow\p(X)\text{ a monomorphism, }\alpha(G)\text{ is regular}\}$
    \item $\{\beta:\Gamma\rightarrow\p(G)\text{ a monomorphism, }\beta(\Gamma')\text{ is stabilizer of the identity of }G\}$
\end{enumerate}

Now assume $K/F$ is a Galois extension with Galois group isomorphic to $\Gamma$. Let $e(\Gamma,G)$ be the 
number of regular subgroups in $\p(\Gamma)$ isomorphic to $G$ which is normalized by $\lambda(\Gamma)$ i.e.
 the number of Hopf-Galois structures on $K/F$ of type $G$. Let $e'(\Gamma,G)$ denote the number of 
subgroups $\Gamma^*$ of $\h(G)$ isomorphic to $\Gamma$, such that the stabilizer in $\Gamma^*$ of $e_G$ is trivial. Then we have the following result.
\begin{lemma}{\cite[See Proposition 1]{b}}\label{l004} With the notations as above we have,
\[e(\Gamma,G) = \dfrac{|\au(\Gamma)|}{|\au(G)|} e'(\Gamma,G).\]
\end{lemma}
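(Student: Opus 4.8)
\subsection*{Proof proposal}

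The plan is to turn both quantities $e(\Gamma,G)$ and $e'(\Gamma,G)$, each of which counts certain \emph{subgroups}, into counts of \emph{monomorphisms}, and then to invoke the bijection between the sets (1) and (2) recalled just above. Since $K/F$ is Galois its normal closure is $K$ itself, so $\Gamma'=\g(L/K)$ is trivial, $X=\Gamma/\Gamma'=\Gamma$, and condition (2) reduces to the requirement that the stabilizer of $e_G$ in $\beta(\Gamma)$ be trivial.

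First I would isolate the elementary principle that a fixed subgroup is the image of exactly $|\au(-)|$ distinct monomorphisms out of a fixed abstract group: if $N\leq\p(\Gamma)$ is regular and isomorphic to $G$, then after fixing one isomorphism $\alpha_0\colon G\xrightarrow{\sim}N$, every monomorphism $\alpha\colon G\to\p(\Gamma)$ with image $N$ is $\alpha_0\circ\theta$ for a unique $\theta\in\au(G)$. Hence each such $N$ arises from exactly $|\au(G)|$ monomorphisms, and the total number of monomorphisms $\alpha\colon G\to\p(\Gamma)$ with $\alpha(G)$ regular and normalized by $\lambda(\Gamma)$ equals $e(\Gamma,G)\,|\au(G)|$. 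The same argument on the other side shows that the number of monomorphisms $\beta\colon\Gamma\to\p(G)$ with image contained in $\h(G)$ and trivial stabilizer of $e_G$ equals $e'(\Gamma,G)\,|\au(\Gamma)|$; note that regularity of $N$ and the stabilizer condition on $\beta$ depend only on the images, so these refinements pass cleanly through the counting.

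The crux is to check that the bijection between (1) and (2) matches the side-(1) condition ``$\alpha(G)$ is normalized by $\lambda(\Gamma)$'' with the side-(2) condition ``$\beta(\Gamma)\subseteq\h(G)$''. Here I would unwind the explicit construction underlying \cite{b}: identifying $X=\Gamma$ with $G$ by using the base point $e_\Gamma$ to transport the regular action of $\alpha(G)$ onto the left regular representation $\lambda(G)$, the transported action of $\lambda(\Gamma)$ becomes $\beta(\Gamma)$, while $\alpha(G)$ itself becomes $\lambda(G)$. Consequently $\lambda(\Gamma)$ normalizes $\alpha(G)$ precisely when $\beta(\Gamma)$ normalizes $\lambda(G)$, and since $\h(G)$ is exactly the normalizer of $\lambda(G)$ in $\p(G)$ (as recalled in the preamble), this is the same as $\beta(\Gamma)\subseteq\h(G)$. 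This transport compatibility, together with the verification that triviality of the stabilizer is preserved, is where I expect the genuine work to lie; everything else is bookkeeping.

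Finally, since the bijection restricts to a bijection between the two families of monomorphisms counted above, their cardinalities agree, giving
\[
e(\Gamma,G)\,|\au(G)|=e'(\Gamma,G)\,|\au(\Gamma)|.
\]
Dividing by $|\au(G)|$ yields the asserted identity $e(\Gamma,G)=\dfrac{|\au(\Gamma)|}{|\au(G)|}\,e'(\Gamma,G)$.
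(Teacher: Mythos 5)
Your proposal is correct, and it follows essentially the same route as the paper, which gives no proof of its own but cites Byott's Proposition~1: the bijection between the sets (1) and (2) recalled just before the lemma, the transport of $\alpha(G)$ to $\lambda(G)$ (so that normalization by $\lambda(\Gamma)$ becomes $\beta(\Gamma)\subseteq\h(G)$), and the count of $|\au(G)|$ (resp. $|\au(\Gamma)|$) monomorphisms per subgroup is precisely the argument underlying the cited result.
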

Note that $\Gamma^*$ is a regular subgroup of $\h(G)$ implies $\Gamma^*$ has the same cardinality as $G$.
A typical element of $\h(G)$ is of the form  $(g,\zeta)$ where $g\in G,\zeta\in\au(G)$. Hence to say $\Gamma^*$ is a
regular subgroup of $\h(G)$ we have to make sure that $g=e_G$ implies that $\zeta$ is the identity automorphism
of $G$ for every $(g,\zeta)\in\Gamma^*$. We will use this condition to check regular embeddings of the concerned groups in the article.

\subsection{Notations} For  $a,b\in \mathbb{Z}$ we will use $(a,b)$ to denote the g.c.d. of $a,b$. For a 
number $n$, we take $\pi(n)=\{p:p\text{ divides }n,p\text{ prime}\}$. The notation $v_p(n)$ denotes the $p$-valuation of $n$. For $n\in\mathbb{N}$, the radical of $n$ is defined to be product of the distinct primes in $\pi(n)$ which will
be denoted as $\ra(n)$. The symbol $\varphi(n)$ denotes the Euler's totient function at $n\in\mathbb{N}$.
A  number $n\in\mathbb{N}$ is called a Burnside number if $(n,\varphi(n))=1$.

\subsection{Result} 
In \cite{b} the author has proved that If $K/F$ is a finite Galois extension of field of degree $T$, then this extesnion admits a unique Hopf-Galois structure if and only if $T$ is Burnside number. 
Since in our case $N>1$ is odd and hence $2N$ is not Burnside, the extension has at least $2$ Hopf-Galois structure. The number of Hopf-Galois structure for various groups have been studied by 
E.  Campedel et al. \cite{ccc}, T. Kohl \cite{k}, Carnahan S.  et al \cite{cc} et cetera. For an extensive literature review one may look at the PhD thesis of  K. N. Zenouz \cite{z}. In \cite{k}, T. Kohl 
has computed $e(G.G)$ when $G$ is Dihedral group. For $N$ odd we look at groups of order $2N$ of the form $\mi_{k,l}:=\D_{2k}\times \C_l$ where $kl=N,(k,l)=1$, whenever radical of $N$ is a Burnside number. Our main result is the following.
\begin{theorem}\label{t001} Let $K/F$ be a Galois extension of fields with $\g(K/F)\cong\Gamma$ and $N\in\mathbb{N}$ be odd.
If $\Gamma=\mi_{k_1,l_1}$ and $G=\mi_{k_2,l_2}$ where $k_1l_1=k_2l_2=N$ and $\ra(N)$ is a Burnside number,
then the number of Hopf-Galois 
structure on $K/F$ of type $G$ is given by
\begin{align*}
    e(\Gamma,G)=\dfrac{l_1l_2}{(l_1,l_2)\ra(l_1)}\cdot2^{|\pi(k_2)|}.
\end{align*}
\end{theorem}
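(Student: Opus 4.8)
The plan is to run everything through Lemma~\ref{l004}, which gives $e(\Gamma,G)=\frac{|\au(\Gamma)|}{|\au(G)|}\,e'(\Gamma,G)$, and then to enumerate the regular subgroups $\Gamma^{*}\cong\Gamma$ of $\h(G)$. First I would pin down the automorphism groups. Because $N$ is odd and $(k_i,l_i)=1$, the orders of $\D_{2k_i}$ and $\C_{l_i}$ are coprime, so these are the Hall subgroups of $\mi_{k_i,l_i}$ and hence characteristic; thus $\au(\mi_{k_i,l_i})\cong\au(\D_{2k_i})\times\au(\C_{l_i})$ with $|\au(\D_{2k_i})|=k_i\varphi(k_i)$ and $|\au(\C_{l_i})|=\varphi(l_i)$. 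Since $\varphi(k_i)\varphi(l_i)=\varphi(N)$, the ratio collapses to $\frac{|\au(\Gamma)|}{|\au(G)|}=\frac{k_1}{k_2}=\frac{l_2}{l_1}$, so it remains to show $e'(\Gamma,G)=\frac{l_1^{2}}{(l_1,l_2)\,\ra(l_1)}\,2^{|\pi(k_2)|}$.

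Next I would use the Burnside hypothesis to rigidify the odd part of any $\Gamma^{*}$. Write $G=\C_N\rtimes\langle s\rangle$, where $\C_N=\langle y\rangle$ is the characteristic subgroup of elements of odd order and $s$ is a reflection with $sys^{-1}=y^{\phi_2}$; then each $\Gamma^{*}$ is generated by an element $A$ of order $N$ lying over $\C_N$ together with an involution $B$ realizing the defining relation $BAB^{-1}=A^{\phi_1}$ of $\Gamma$. Applying the natural projection $\h(G)\to\au(G)$, the image of $A$ is an automorphism of order dividing $N$. Now the primes contributed to $\varphi(k_2)\varphi(l_2)=\varphi(N)$ by the factors $(p-1)$ are, by the hypothesis that $\ra(N)$ is Burnside, coprime to $N$; hence every such contribution is killed and the automorphism part of $A$ is forced to have order a product of primes dividing $N$, confining it to the small cyclic $p$-pieces $\C_{p^{v_p(N)-1}}$ of $(\Z/k_2)^{*}$ and $(\Z/l_2)^{*}$, together with the normal cyclic part $\C_{k_2}$ of $\au(\D_{2k_2})$. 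This determines $\langle A\rangle$ up to a controlled amount of data and forces it to act semiregularly on $G$ with two orbits.

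With the odd part under control, the enumeration decomposes over the primes $p\mid N$ by the Chinese Remainder Theorem, and I would compute a local factor at each $p$ according to the four cases given by whether $p$ divides $k_1$ or $l_1$ (the reflection of $\Gamma$ inverting or fixing the $p$-part) and whether it divides $k_2$ or $l_2$ (same for $G$). The involution $B$ contributes a genuine binary choice of reflection data exactly at those primes where the $p$-part of $G$ is dihedral, which produces the factor $2^{|\pi(k_2)|}$; the freedom in positioning the image of the central $\C_{l_1}$-directions, where the reflection acts trivially, contributes the prime-power factors. Because the Burnside condition makes the relevant Sylow $p$-piece of $\au(\C_N)$ cyclic, the count of admissible order-$p^{a}$ directions is governed by a single $\gcd$ per prime, which is what introduces the $(l_1,l_2)$ and $\ra(l_1)$ terms; assembling the local factors gives $e'(\Gamma,G)=\frac{l_1^{2}}{(l_1,l_2)\,\ra(l_1)}\,2^{|\pi(k_2)|}$, and multiplying by the prefactor $\frac{l_2}{l_1}$ yields the stated formula.

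The main obstacle is the precise joint analysis of the involution $B$ and the regularity constraint. One must classify the order-two elements $(\gamma,\theta)$ of $\h(G)$, namely those with $\theta^{2}=\mathrm{id}$ and $\gamma\,\theta(\gamma)=e_G$, impose the conjugation relation $BAB^{-1}=A^{\phi_1}$ compatibly with the chosen $A$, and then verify the regularity criterion that $(g,\zeta)\in\Gamma^{*}$ with $g=e_G$ forces $\zeta=\mathrm{id}$, all while accounting for the fact that many generating pairs $(A,B)$ describe the same subgroup (equivalently, counting monomorphisms and dividing by $|\au(\Gamma)|$). The Burnside hypothesis on $\ra(N)$ is exactly what keeps this bookkeeping finite and separable across primes: without it the $(p-1)$-parts of the automorphism groups would couple distinct primes and the clean product formula would break down.
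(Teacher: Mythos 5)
Your overall strategy is the same as the paper's: invoke Lemma~\ref{l004} to reduce to counting regular subgroups $\Gamma^*\cong\Gamma$ of $\h(G)$, split the count prime-by-prime via the Chinese Remainder Theorem according to whether each $p\mid N$ divides $k_1$ or $l_1$ and $k_2$ or $l_2$, use the Burnside hypothesis on $\ra(N)$ to confine the automorphism components to the $p$-power parts of $\Z_{l_2}^*$, $\Z_{k_2}^*$, and divide the number of generating tuples by $|\au(\Gamma)|$. Your preliminary reductions are also correct and consistent with the paper: $|\au(\mi_{k_i,l_i})|=k_i\varphi(k_i)\varphi(l_i)=k_i\varphi(N)$, so the prefactor is $k_1/k_2=l_2/l_1$, and your target $e'(\Gamma,G)=\frac{l_1^2}{(l_1,l_2)\ra(l_1)}2^{|\pi(k_2)|}$ agrees with Proposition~\ref{p001}, since $\frac{l_1N}{k_1}=l_1^2$. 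Your observation about where the Burnside condition enters (killing the $(p-1)$-parts, which is equation~\ref{e040} in the paper) is also the right one.

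However, there is a genuine gap: the enumeration itself is never performed, and it is precisely the part you defer that constitutes the proof. Concretely, what is missing is (i) writing the images of the generators in $\h(\mi_{k_2,l_2})$ in coordinates and extracting the full system of constraint equations from the relations of $\Gamma$ (the paper's Section~\ref{ap}, equations \ref{e001}--\ref{e039}); (ii) solving that system prime by prime, which is where the factors $\varphi(\cdot)$, $(k_1,l_2)$, $(l_1,l_2)/\ra((l_1,l_2))$, $(l_1,k_2)/\ra((l_1,k_2))$, and $2^{|\pi((k_1,k_2))|}$, $2^{|\pi((l_1,k_2))|}$ actually arise (the paper's Cases I--VI, using Lemma~\ref{l001}); and (iii) the regularity analysis, which you treat as automatic but which is an independent constraint: the involution $B=(\gamma,\theta)$ splits into two branches according to whether $\gamma$ lies in $\C_N$ or in its complement (the paper's $j'=0$ versus $j'=1$), and the paper shows that the $j'=0$ branch, although it satisfies all the homomorphism equations, never yields a regular subgroup, while $j'=1$ always does. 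Your claim that the order-confinement of $A$ ``forces it to act semiregularly with two orbits'' is unsupported --- semiregularity does not follow from the order restriction, and eliminating the non-regular branch is exactly what changes the answer. As written, the decisive numerical claims (the binary choices at primes dividing $k_2$, the single-gcd count per prime producing $(l_1,l_2)$ and $\ra(l_1)$) are asserted as expected outcomes rather than derived; indeed you label this joint analysis the ``main obstacle'' without resolving it, so the proposal is a correct plan for the paper's proof rather than a proof.
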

Rest of the article is organized as follows. In section $2$ we collect background materials needed for the proof of the theorem. Section $3$ is devoted to the proof of the main result. Next in section $4$ we derive some further consequences of the results found in section $3$. All the necessary equations which we use in section $3$ have been included in section $5$.

\section{Preliminaries}
In this section we give complete description of groups of the form $\Z_N\rtimes\Z_2$ and state some basic number theoretic results which will be used in Section $3$ to enumerate regular embeddings.
\subsection{Groups of the form $\Z_{N}\rtimes_\phi\Z_2$, $N$ odd}
Note that if $N=\prod\limits_{t=1}^mp_t^{\alpha_t}$, where $p_i$'s are all distinct primes, then
\begin{align*}
    \mathbb{Z}_N&\cong\bigoplus\limits_{t=1}^m\mathbb{Z}_{p_t^{\alpha_t}},\\
    \text{and }\au(\mathbb{Z}_N)\cong\Z_N^*&\cong\prod_{t=1}^m\mathbb{Z}_{p_t^{\alpha_t}}^*\\
    &\cong\bigoplus\limits_{t=1}^m\mathbb{Z}_{p_t^{\alpha_t-1}(p_t-1)}.
\end{align*}
For $x\in\mathbb{Z}_N$ we have $x=(x_1,x_2,\cdots,x_m)$ where $x_u\in\mathbb{Z}_{p_u^{\alpha_u}}$. We define
$p_u(x)=x_u$ for $p_u\in\pi(N)$.

If $\phi:\Z_2=\{\pm 1\}\rightarrow \au(\Z_{N})$ is a group 
homomorphism with $p_u(\phi(-1))=-1$ for all $p_u\in\pi(N)$, then $\Z_N\rtimes_\phi\Z_2$ is the dihedral group 
of order $2N$ and we will denote this group by $\mathfrak{D}$. When $p_u(-1)=1$ for all
$p_u\in\pi(N)$, then $\Z_n\rtimes_\phi\Z_2$ is the cyclic group 
of order $2N$ and we will denote this group by $\mathfrak{C}$. Now suppose $p_u(-1)=1$ for some $p_u\in\pi(n)$ and $p_{u'}(-1)=-1$ for some $p_{u'}\in\pi(N)$, then the group is isomorphic 
to $\D_{2k}\times \C_l$ for some $k,l\in\mathbb{N}$ with $kl=N$. We denote this group by $\mathfrak{M}_{k,l}$.
We have to consider the regular embeddings for the following cases:
\begin{enumerate}
    \item $\mi_{k_1,l_1}\hookrightarrow\h(\mi_{k_2,l_2})$ where $k_1l_1=k_2l_2=N$ and $(k_1,l_1)=(k_2,l_2)=1$,
    \item $\di\hookrightarrow\h(\mi_{k,l})$ with $k,l>1$,
    \item $\cy\hookrightarrow\h(\mi_{k,l})$ with $k,l>1$,
    \item $\di\hookrightarrow\h(\cy)$
    \item $\cy\hookrightarrow\h(\di)$
    \item $\mi_{k,l}\hookrightarrow\h(\cy)$ with $k,l>1$,
    \item $\mi_{k,l}\hookrightarrow\h(\di)$ with $k,l>1$,
    \item $\di\hookrightarrow\h(\di)$
    \item $\cy\hookrightarrow\h(\cy)$
\end{enumerate}
While counting the regular embeddings we consider the first case and all other cases are special cases of it. We must mention here that
the last two cases have been previously discussed in \cite{k} and \cite{b1} respectively
and our answers match with the results therein.

\subsection{Basic results} 
\begin{lemma}\label{l001}
Let $p>2$ be a prime and $\gamma\equiv 1\mod p$. Define $f_\gamma(0)=0$ and for each $\delta\in\Z_{>0}$ define
\begin{align*}
    f_\gamma(\delta)=\sum\limits_{i=0}^{\delta-1}\gamma^i.
\end{align*}
Then 
\begin{align*}
    f_\gamma(\delta_1)\equiv f_\gamma(\delta_2)\mod p^n\text{ iff }\delta_1\equiv \delta_2\mod p^n.
\end{align*}
\end{lemma}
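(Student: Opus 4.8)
The plan is to reduce the two-sided congruence to a single divisibility statement and then pin down the exact $p$-adic valuation of $f_\gamma(\delta)$. First I would record the telescoping identity: for $\delta_1\ge\delta_2\ge 0$,
\[
f_\gamma(\delta_1)-f_\gamma(\delta_2)=\sum_{i=\delta_2}^{\delta_1-1}\gamma^i=\gamma^{\delta_2}\sum_{j=0}^{\delta_1-\delta_2-1}\gamma^j=\gamma^{\delta_2}\,f_\gamma(\delta_1-\delta_2).
\]
Because $\gamma\equiv 1\pmod p$, the integer $\gamma$ is a unit modulo $p^n$, so $\gamma^{\delta_2}$ is invertible and the congruence $f_\gamma(\delta_1)\equiv f_\gamma(\delta_2)\pmod{p^n}$ is equivalent to $f_\gamma(\delta_1-\delta_2)\equiv 0\pmod{p^n}$. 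Writing $\delta=\delta_1-\delta_2\ge 0$, the whole statement collapses to the claim that $f_\gamma(\delta)\equiv 0\pmod{p^n}$ holds if and only if $\delta\equiv 0\pmod{p^n}$.

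The heart of the argument is then the valuation identity $v_p(f_\gamma(\delta))=v_p(\delta)$ for every $\delta\ge 1$ (both sides being $+\infty$ when $\delta=0$, consistent with $f_\gamma(0)=0$). If $\gamma=1$ this is trivial since $f_\gamma(\delta)=\delta$; otherwise I would use the closed form
\[
f_\gamma(\delta)=\frac{\gamma^\delta-1}{\gamma-1}
\]
together with the lifting-the-exponent lemma. Since $p$ is odd with $p\mid\gamma-1$ and $p\nmid\gamma$, that lemma gives $v_p(\gamma^\delta-1)=v_p(\gamma-1)+v_p(\delta)$, and subtracting $v_p(\gamma-1)$ yields $v_p(f_\gamma(\delta))=v_p(\delta)$. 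If one prefers a self-contained argument, the same identity follows by induction on $v_p(\delta)$: the base case $p\nmid\delta$ is immediate from $f_\gamma(\delta)\equiv\delta\pmod p$, and the inductive step factors $\gamma^{p\delta'}-1=(\gamma^{\delta'}-1)\bigl((\gamma^{\delta'})^{p-1}+\cdots+1\bigr)$, whose second factor is congruent to $p$ modulo a suitable power of $p$.

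With the valuation identity in hand the conclusion is immediate: $f_\gamma(\delta)\equiv 0\pmod{p^n}$ is equivalent to $v_p(f_\gamma(\delta))\ge n$, hence to $v_p(\delta)\ge n$, hence to $p^n\mid\delta$, i.e.\ $\delta\equiv 0\pmod{p^n}$. Combining this with the reduction of the first paragraph proves the lemma. The step I expect to carry all the weight is the valuation identity $v_p(f_\gamma(\delta))=v_p(\delta)$; everything else is formal. It is also the only place where $p>2$ is genuinely used, since lifting the exponent (and the inductive estimate on the cofactor above) fails in its stated form at $p=2$; this is harmless here because $N$ is odd throughout, so every prime in $\pi(N)$ satisfies $p>2$.
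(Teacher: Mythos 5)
Your proof is correct. One thing you could not have known: the paper gives no argument of its own for this lemma --- its entire ``proof'' is the single line ``See the proof of Lemma $2.17$ in \cite{ccc}'' --- so what you have written is precisely the self-contained proof that the paper omits, and there is no in-paper argument to compare it against step by step. Your route is the natural one and very likely parallels the cited argument: the telescoping identity $f_\gamma(\delta_1)-f_\gamma(\delta_2)=\gamma^{\delta_2}f_\gamma(\delta_1-\delta_2)$ together with the invertibility of $\gamma$ modulo $p^n$ reduces the lemma to the equivalence $p^n\mid f_\gamma(\delta)\Leftrightarrow p^n\mid\delta$, and this follows from the valuation identity $v_p(f_\gamma(\delta))=v_p(\delta)$, which for $\gamma\neq 1$ is exactly lifting-the-exponent applied to $(\gamma-1)f_\gamma(\delta)=\gamma^{\delta}-1$ (the case $\gamma=1$ being trivial). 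Your fallback induction is also sound: for odd $p$ the cofactor $1+\gamma^{\delta'}+\cdots+(\gamma^{\delta'})^{p-1}$ has $p$-adic valuation exactly $1$ once $\gamma^{\delta'}\equiv 1\pmod{p}$, because the correction term involves $\binom{p}{2}=p(p-1)/2$, which is divisible by $p$. Your closing remark about the role of $p>2$ is on target as well: the statement genuinely fails at $p=2$ (take $\gamma=3$, $n=2$; then $f_3(2)=4\equiv 0=f_3(0)\pmod{4}$ while $2\not\equiv 0\pmod{4}$), and the oddness hypothesis costs nothing in this paper since the lemma is only ever applied to primes dividing the odd number $N$.
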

\begin{proof}
See the proof of Lemma $2.17$ in \cite{ccc}.
\end{proof}
\begin{corollary}\label{c001}
Let $p$ be a prime and $b\in\Z$ such that $b^{p^m}\equiv 1\mod p^n$. Then
\begin{align*}
    p^m|f_b(p^m)\text{and }p^{m+1}\nmid f_b(p^{m}).
\end{align*}
\end{corollary}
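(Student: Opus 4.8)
The plan is to read Corollary \ref{c001} off Lemma \ref{l001} by specializing the two indices appearing there, with $\gamma=b$, $\delta_1=p^m$ and $\delta_2=0$. Recall that the equivalence in Lemma \ref{l001} can be read as the valuation identity $v_p\bigl(f_\gamma(\delta_1)-f_\gamma(\delta_2)\bigr)=v_p(\delta_1-\delta_2)$ for $\gamma\equiv 1\pmod p$ with $p$ odd; so the strategy is simply to extract the exact power of $p$ dividing $f_b(p^m)$ from this identity, comparing against $v_p(p^m)=m$.

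Before applying the lemma I would check that its hypothesis $\gamma\equiv 1\pmod p$ holds for $\gamma=b$. Here $p$ is odd, which is the only case relevant to the paper since $N$ is odd. Reducing the assumption $b^{p^m}\equiv 1\pmod{p^n}$ modulo $p$ gives $b^{p^m}\equiv 1\pmod p$, and since $b^{p^m}\equiv b\pmod p$ by repeated use of Fermat's little theorem, I conclude $b\equiv 1\pmod p$. (One only needs $n\geq 1$ here; the full strength of the hypothesis is not used.) This is the single point that requires care, and I regard it as the main, though modest, obstacle: without $b\equiv 1\pmod p$ the lemma does not apply, and the conclusion can genuinely fail, for instance when $p=2$.

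With the hypothesis verified, I would specialize Lemma \ref{l001} with $\delta_1=p^m$ and $\delta_2=0$. Since $f_b(0)=0$, the equivalence becomes, for each positive integer $n$,
\begin{equation*}
f_b(p^m)\equiv 0\pmod{p^n}\iff p^m\equiv 0\pmod{p^n}.
\end{equation*}
Taking $n=m$, the right-hand congruence holds, so $p^m\mid f_b(p^m)$; taking $n=m+1$, the right-hand congruence fails because $p^{m+1}\nmid p^m$, whence $f_b(p^m)\not\equiv 0\pmod{p^{m+1}}$, that is $p^{m+1}\nmid f_b(p^m)$. Equivalently, the largest $n$ with $p^n\mid f_b(p^m)$ is $n=m$, i.e.\ $v_p(f_b(p^m))=m$, which is precisely the assertion of the corollary.
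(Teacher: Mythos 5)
Your proof is correct and follows exactly the paper's route: the paper's entire proof is the remark that $b\equiv 1\pmod p$, after which Lemma \ref{l001} (with $\delta_1=p^m$, $\delta_2=0$) gives both divisibility claims, which is precisely what you spelled out. Your additional observation that the statement implicitly requires $p$ odd (the lemma assumes $p>2$, and the conclusion can indeed fail for $p=2$) is a fair point of care that the paper glosses over.
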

\begin{proof}
This follows from the observation that $b\equiv 1\mod p$.
\end{proof}

\section{Regular embeddings and Hopf-Galois structure}\label{re}
We start with a presentation of the group $\mi_{k,l}=\D_{2k}\times \C_{l}$. It is given by
\begin{align*}
    \mi_{k,l}=\left\langle r,s,t:r^k,s^2,t^l,srsr,sts^{-1}t^{-1},rtr^{-1}t^{-1}\right\rangle.
\end{align*}
We have the followings.
\begin{enumerate}
    \item $\h(\mi_{k,l})\cong\h(\C_l)\times \h(\D_{2k})$, since $(k,l)=1$,
    \item $\au(\D_{2k})\cong\left\{\begin{pmatrix}b&a\\0&1\end{pmatrix}:b\in\mathbb{Z}_k^*,a\in\mathbb{Z}_k\right\}$ where
    \begin{align*}
        \begin{pmatrix}b&a\\0&1\end{pmatrix}\cdot r=r^b,\begin{pmatrix}b&a\\0&1\end{pmatrix}\cdot s=r^as,
    \end{align*} 
    \item $\au(\C_l)\cong\mathbb{Z}_l^*$,
    \item \[\h( \mi_{k,l})\cong\left\{\left(\begin{pmatrix}b&a\\0&1\end{pmatrix},r^is^j,\begin{pmatrix}d&c\\0&1\end{pmatrix}\right):\displaystyle{\substack{b\in\Z_l^*,a\in\Z_l,d\in\Z_k^*,c\in\Z_k,\\1\leq i\leq k-1,j=0,1}}\right\},\]
    where $(r^is^j,a)$ corresponds to the element of $\mi_{k,l}$.
\end{enumerate}

Now we want to look at the embeddings $    \Phi:\mi_{k_1,l_1}\rightarrow\h(\mi_{k_2,l_2}).$
We take \begin{align*}
    \mi_{k_1,l_1}&=\left\langle r_1,s_1,t_1:r_1^{k_1},s_1^2,t_1^{l_1},s_1r_1s_1r_1,s_1t_1s_1^{-1}t_1^{-1},r_1t_1r_1^{-1}t_1^{-1}\right\rangle,\\
        \mi_{k_2,l_2}&=\left\langle r_2,s_2,t_2:r_2^{k_2},s_2^2,t_2^{l_2},s_2r_2s_2r_2,s_2t_2s_2^{-1}t_2^{-1},r_2t_2r_2^{-1}t_2^{-1}\right\rangle.
\end{align*}
Let us assume that
\begin{align*}
    \Phi(r_1)&=\left(\begin{pmatrix}b&a\\0&1\end{pmatrix},r_2^is_2^j,\begin{pmatrix}d&c\\0&1\end{pmatrix}\right),\\
    \Phi(s_1)&=\left(\begin{pmatrix}b'&a'\\0&1\end{pmatrix},r_2^{i'}s_2^{j'},\begin{pmatrix}d'&c'\\0&1\end{pmatrix}\right),\\
    \Phi(t_1)&=\left(\begin{pmatrix}b''&a''\\0&1\end{pmatrix},r_2^{i''}s_2^{j''},\begin{pmatrix}d''&c''\\0&1\end{pmatrix}\right).
\end{align*}
We define the set $\mathfrak{V}=\{a,b,i,j,c,d,a',b',i',j',c',d',a'',b'',i'',j'',c'',d''\}$ and refer to the elements of the set as variables.

Note that we can consider the element $a\in\mathbb{Z}_{l_2}$ (resp.
$b\in\mathbb{Z}_{l_2}^*$) to be an element of $\mathbb{Z}_N$ (resp. $\mathbb{Z}_N^*$) by setting $p_u(a)=0$  (resp. $p_u(b)=1$) for all
$p_u\in\pi(N)\setminus\pi(l_2)$. Same treatment will be applicable to all variables in $\mathfrak{V}$ accordingly. 
We observe that $N=(k_1,l_2)(l_1,l_2)(k_1,k_2)(l_1,k_2)$ and 
the four entities in the right are mutually coprime. Thus it is enough to count the total number of 
possibilities of the variables in each of $\mathbb{Z}_{\beta}$, where 
$\beta\in\{(k_1,l_2),(l_1,l_2),(k_1,k_2),(l_1,k_2)\}$. Now we look at the embeddings of the groups inside the holomorph.
We will encounter several equations in this context. We put the equations in section \ref{ap}.

\subsection{Embeddings}
Considering equations \ref{e001}, \ref{e019} (also \ref{e032}) we get that $p_u(b)=1$ for all 
$p_u\in\pi(N)$. Similarly $p_u(d)=1$ for all $p_u\in\pi(N)$. Since $|r_1|=k_1$ we get that $p_u(a)$ is a 
unit whenever $p_u\in\pi((k_1,l_2))$ and $0$ for other primes (this is equivalent to saying 
$|a|=(k_1,l_2)$). Similarly 
\begin{enumerate}
    \item $p_u(i)$ is a unit for $p_u\in\pi((k_1,k_2))$ and $0$ otherwise,
    \item $p_u(c)=0$ whenever $p_u\in\pi(n)\setminus\pi((k_1,k_2))$,
    \item $p_u(i'')$ is a unit for $p_u\in\pi((l_1,k_2))$ and $0$ otherwise,
    \item $p_u(a'')$ is a unit for $p_u\in\pi((l_1,l_2))$ and $0$ otherwise.
\end{enumerate}
Point $(3)$ and $(4)$ follows from Lemma  \ref{l001}.
From equations \ref{e001} and \ref{e019} we have that $p_u(b)=1$ for all $p_u\in\pi(N)$.
In each of these following cases we only determine the coefficients of the variables for the primes relevant to that case.

\textbf{Case I: Inside $\mathbb{Z}_{(k_1,l_2)}$}
Using equations \ref{e015}, \ref{e020} and $b=1$, we have that $a(1+b')=0$ thus
$p_u(b')=-1$ for $p_u\in\pi((k_1,l_2))$. Referring to equation \ref{e011} and $p_u(a)$ is a unit for 
$p_u\in\pi((k_1,l_2))$ we have that $p_u(b'')=1$ for $p_u\in\pi(k_1,l_2)$. Using equation \ref{e037} we have 
$p_u(a'')=0$ for all $p_u\in\pi((k_1,l_2))$ (since $(2,p_u)=1$). All other variables have one possibility since $k_2$ is coprime to $(k_1,l_2)$. Hence
\begin{enumerate}
    \item $a$ has $\varphi(k_1,l_2)$ possibilities,
    \item $a'$ has $(k_1,l_2)$ possibilities.
\end{enumerate}

\textbf{Case II: Inside $\mathbb{Z}_{(l_1,l_2)}$}
Here $p_u(a)=0$ for all $p_u\in\pi((l_1,l_2))$. Using equation \ref{e014} (equiv. \ref{e027}) we have $p_u(b')=\pm 1$ for all $p_u\in\pi((l_1,l_2))$. Considering equation \ref{e024} (equiv. \ref{e037}) and that $1-p_u(b'')$ is a zero divisor for $p_u\in\pi((l_1,l_2))$, using equation \ref{e015} (equiv. \ref{e028}) we get $p_u(b')=1$ which implies that $p_u(a')=0$ in this case.
Since $\ra(N)$ is a Burnside number, from equation \ref{e006} we have that 
\begin{equation}\label{e040}
    p_u(b'')^{p_u^{\alpha_u-1}}=1 \text{ for all }p_u.
\end{equation}
Hence
\begin{enumerate}
    \item $(a',b')$ has $1$ possibility,
    \item $a''$ has $\varphi(l_1,l_2)$ possibilities,
    \item $b''$ has $\dfrac{(l_1,l_2)}{\ra((l_1,l_2))}$ possibilities.
\end{enumerate}
\begin{remark}
Note that the above two cases do not depend on $j'$.
\end{remark}

\textbf{Case III: Inside $\mathbb{Z}_{(k_1,k_2)}$ ($j'=0$)} We have $p_u((i))$ is a unit for all $p_u\in\pi((k_1,k_2))$.  
Combining equations \ref{e018} and \ref{e021} we have $p_u(d')=-1$ for all $p_u\in\pi((k_1,k_2))$.
Since $(k_1,k_2)$ is coprime to $l_1$ and $\ra(N)$ is a Burnside number, using equation \ref{e009}  we conclude that $p_u(d'')=1$ for all $p_u\in\pi((k_1,k_2))$. This implies that $p_u(i'')=p_u(c'')=0$ for all $p_u\in\pi((k_1,k_2))$, since
$(l_1,(k_1,k_2))=1$. Hence
\begin{enumerate}
    \item $i$ has $\varphi((k_1,k_2))$ possibilities
    \item each of $c,i',c'$ has $(k_1,k_2)$ possibilities.
\end{enumerate}

\textbf{Case IV: Inside $\mathbb{Z}_{(l_1,k_2)}$ ($j'=0$)} We have $p_u(i)=p_u(c)=0$ for all $p_u\in\pi((l_1,k_2))$. Note that $p_u(d')=\pm 1$ for all $p_u\in\pi((l_1,k_2))$. Using equation \ref{e025} we have
\begin{align*}
    p_u(i'')(1-p_u(d'))=p_u(i')(1-p_u(d''))\mod p_u^{\alpha_u}\text{ for all }p_u\in\pi((l_1,k_2)).
\end{align*}
Then $p_u(d')=-1$ for some $p_u\in\pi((l_1,k_2))$ implies that
\begin{align*}
    2p_u(i'')=p_u(i')(1-p_u(d''))\mod p_u^{\alpha_u}\text{ for all }p_u\in\pi((l_1,k_2)).
\end{align*}
Since $2p_u(i'')$ is a unit and $1-p_u(d'')$ ($\neq 0$) is a zero divisor, this case does not arise. Hence we get that 
$p_u(d')=1$ for all $p_u\in\pi((l_1,k_2))$. This implies that $p_u(i')=p_u(c')=0$ for all $p_u\in\pi((l_1,k_2))$.
Similar to equation \ref{e040} $p_u(d'')^{p_u^{\alpha_u-1}}=1$ for all $p_u\in\pi((l_1,k_2))$, since $\ra(n)$ is Burnside.
Hence
\begin{enumerate}
    \item $i''$ has $\varphi((l_1,k_2))$ possibilities
    \item $c''$ has $(l_1,k_2)$ possibilities
    \item $d''$ has $\dfrac{(l_1,k_2)}{\ra((l_1,k_2))}$ possibilities.
\end{enumerate}

\textbf{Case V: Inside $\mathbb{Z}_{(k_1,k_2)}$ ($j'=1$)} We have $p_u(d'')=1,p_u(c'')=p_u(i'')=0$ for all $p_u\in\pi((k_1,k_2))$ (as in Case III). Note that $p_u(d')=\pm 1$ for all $p_u\in\pi((k_1,k_2))$. First let us assume that $p_u(d')=1$ for some $p_u$. Then $p_u(c')=0$ and hence $i''\in\mathbb{Z}_{p_u^{\alpha_u}}$. Using equation \ref{e036} we conclude that $p_u(c)=0$.

Next, if $p_u(d')=-1$ for some $p_u$, using equation \ref{e034}
\begin{align*}
    p_u(c')=2p_u(i')\mod p_u^{\alpha_u}.
\end{align*}
Also combining equations \ref{e031}, \ref{e034} we get that $2p_u(i)=-p_u(c)$. Hence
\begin{enumerate}
    \item $i$ has $\varphi((k_1,k_2))$ possibilities,
    \item $(d',i')$ has $2^{|\pi((k_1,k_2))|}(k_1,k_2)$ possibilities.
\end{enumerate}

\textbf{Case VI: Inside $\mathbb{Z}_{(l_1,k_2)}$ ($j'=1$)} We have $p_u(i)=p_u(c)=0$ for all $p_u\in\pi((l_1,k_2))$. Let us assume that $p_u(d')=1$ for some $p_u\in\pi((l_1,k_2))$. Then $p_u(c')=0$. Then using equation \ref{e038} we have
\begin{align*}
    p_u(i')(1-p_u(d''))=2p_u(i'')+p_u(c'')\mod p_u^{\alpha_u}.
\end{align*}
On the other hand if $p_u(d')=-1$ for some $p_u\in\pi((l_1,k_2))$ we get that $p_u(c')=2p_u(i')$ and using equation \ref{e038} we get
\begin{align*}
    p_u(i')(1-p_u(d''))=p_u(c'')\mod p_u^{\alpha_u}.
\end{align*}
Hence in either of the cases $p_u(c'),p_u(c'')$ get fixed by $p_u(d'),p_u(i')$. Hence
\begin{enumerate}
    \item $(d',i')$ has possibilities $2^{|\pi((l_1,k_2))|}(l_1,k_2)$ possibilities,
    \item $i''$ has $\varphi((l_1,k_2))$ possibilities (as in Case IV)
    \item $d''$ has $\dfrac{(l_1,k_2)}{\ra((l_1,k_2))}$ possibilities (as in Case IV).
\end{enumerate}
\subsection{Regularity}
Now we check the regularity of these groups. Note that any element $\sigma$ in the image of $\Phi$ is of the form
\begin{align*}
    \left(\begin{pmatrix}\widetilde{b}&\widetilde{a}\\0&1\end{pmatrix},r_2^{\widetilde{i}}s_2^{\widetilde{j}},\begin{pmatrix}\widetilde{d}&\widetilde{c}\\0&1\end{pmatrix}\right).
\end{align*}
Since $\Phi$ is a homomorphism, this element corresponds to some
$\Phi(r_1)^{\lambda}\Phi(s_1)^{\lambda'}\Phi(t_1)^{\lambda''}$, where $0\leq\lambda\leq l_1-1,0\leq\lambda'\leq1,0\leq\lambda''\leq k_1-1$. First we consider the case when $j'=0$.
Note that in this case
\begin{align*}
    &\Phi(r_1)^\lambda\Phi(s_1)\\
    =&\left(\begin{pmatrix}1&\lambda a\\0&1\end{pmatrix},r_2^{\lambda i},\begin{pmatrix}1&\lambda c\\0&1\end{pmatrix}\right)\left(\begin{pmatrix}b'&a'\\0&1\end{pmatrix},r_2^{i'},\begin{pmatrix}d'&c'\\0&1\end{pmatrix}\right)\\
    =&\left(\begin{pmatrix}b'&\lambda a+a'\\0&1\end{pmatrix},r_2^{\lambda i+i'},\begin{pmatrix}d'&\lambda c+c'\\0&1\end{pmatrix}\right).
\end{align*}
By Chinese Remainder theorem
there exists $0<\lambda<k_1$, such that $\lambda a+a'=0\mod (k_1,l_2)$ and $\lambda i+i'=0\mod(k_1,k_2)$.
Also we have 
\begin{align*}
    p_u(b')&=\begin{cases}
    -1&\text{for all }p_u\in\pi((k_1,l_2))\\
    1&\text{for all }p_u\in\pi((l_1,l_2))
    \end{cases},\\
    p_u(d')&=\begin{cases}
    -1&\text{for all }p_u\in\pi((k_1,k_2))\\
    1&\text{for all }p_u\in\pi((l_1,k_2))
    \end{cases}.
\end{align*}
Since for any such $0<\lambda<k_1$ the element $\Phi(r_1)^\lambda\Phi(s_1)$ is non-trivial. Hence the group generated 
in this case is not regular. Thus $j'=0$ is not possible.

In case $j'=1$, any term of the form $\Phi(r_1)^\lambda\Phi(s_1)\Phi(t_1)^{\lambda''}$ is an
element of a regular subgroup. Hence to check regularity we need to consider the terms $\Phi(r_1)^\lambda\Phi(t_1)^{\lambda''}$ with $\widetilde{a}=0,\widetilde{i}=0$. We have,
\begin{align*}
a''(1+b''+\cdots+(b'')^{\lambda''-1})&=-\lambda a\mod l_2\\
i''(1+d''+\cdots+(d'')^{\lambda''-1})&=-\lambda i\mod k_2.
\end{align*}
Since $p_u(i'')=0$ and $p_u(i)$ is a unit for all $p_u\in\pi((k_1,k_2))$, we get that $p_u(\lambda)=0$ therein. One can also check that $p_u(\lambda)=0$ for all $p_u\in\pi((k_1,l_2))$. Hence $\lambda=0$. Similar as before, using lemma \ref{l001}, we have $\lambda''=0$.
\begin{proposition}\label{p001}
If $\Gamma=\mi_{k_1,l_1}$ and $G=\mi_{k_2,l_2}$, where $k_1l_1=k_2l_2=N$ is an odd number and $\ra(N)$ is a Burnside number 
then
\begin{align*}
    e'(\Gamma,G)=\dfrac{l_1N}{k_1(l_1,l_2)\ra(l_1)}\cdot2^{|\pi(k_2)|}.
\end{align*}
\end{proposition}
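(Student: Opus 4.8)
The plan is to count the regular monomorphisms $\Phi\colon\mi_{k_1,l_1}\to\h(\mi_{k_2,l_2})$ and then pass from maps to subgroups. By the regularity analysis above, the homomorphism constraints force $b=d=1$ and $j=j''=0$, and a choice of the variables in $\mathfrak{V}$ produces a regular image (equivalently, one whose stabilizer of $e_G$ is trivial) precisely when $j'=1$; the subcases $j'=0$ (Cases III and IV) contribute nothing. So I only assemble the counts recorded in Cases I, II, V and VI. These four cases together assign a number of possibilities to each variable of $\mathfrak{V}$ exactly once, with $b',c,c',c''$ determined by the remaining variables and the pair $(d',i')$ counted as a single bundle.

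Since $N=(k_1,l_2)(l_1,l_2)(k_1,k_2)(l_1,k_2)$ with the four factors pairwise coprime, the Chinese Remainder Theorem splits every variable into independent contributions over $\mathbb{Z}_{(k_1,l_2)}$, $\mathbb{Z}_{(l_1,l_2)}$, $\mathbb{Z}_{(k_1,k_2)}$ and $\mathbb{Z}_{(l_1,k_2)}$, so the number of regular monomorphisms is the product of the per-case counts:
\begin{align*}
P={}&\varphi((k_1,l_2))(k_1,l_2)\cdot\varphi((l_1,l_2))\tfrac{(l_1,l_2)}{\ra((l_1,l_2))}\\
&{}\cdot\varphi((k_1,k_2))2^{|\pi((k_1,k_2))|}(k_1,k_2)\cdot 2^{|\pi((l_1,k_2))|}(l_1,k_2)^2\tfrac{\varphi((l_1,k_2))}{\ra((l_1,k_2))},
\end{align*}
where the four groups of factors come from Cases I, II, V and VI respectively. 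I would then simplify using three multiplicative identities across the coprime factors: $\varphi((k_1,l_2))\varphi((l_1,l_2))\varphi((k_1,k_2))\varphi((l_1,k_2))=\varphi(N)$; the identity $2^{|\pi((k_1,k_2))|}2^{|\pi((l_1,k_2))|}=2^{|\pi(k_2)|}$ since $(k_1,k_2)(l_1,k_2)=k_2$; and $\ra((l_1,l_2))\ra((l_1,k_2))=\ra(l_1)$ together with $(l_1,l_2)(l_1,k_2)=l_1$. Collecting the remaining integer factors as $\tfrac{N}{\ra(l_1)}(l_1,k_2)$ gives
\[
P=\varphi(N)\cdot 2^{|\pi(k_2)|}\cdot\frac{N(l_1,k_2)}{\ra(l_1)}.
\]

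To descend to subgroups, I would use that the regular monomorphisms with a fixed image $\Gamma^*\cong\Gamma$ are exactly the isomorphisms $\Gamma\to\Gamma^*$, of which there are $|\au(\Gamma)|$; hence each admissible subgroup is counted $|\au(\Gamma)|$ times and $e'(\Gamma,G)=P/|\au(\Gamma)|$. Here $\Gamma=\D_{2k_1}\times\C_{l_1}$ with $(2k_1,l_1)=1$, so $\au(\Gamma)\cong\au(\D_{2k_1})\times\au(\C_{l_1})$; the matrix description gives $|\au(\D_{2k_1})|=k_1\varphi(k_1)$ and $|\au(\C_{l_1})|=\varphi(l_1)$, whence $|\au(\Gamma)|=k_1\varphi(k_1)\varphi(l_1)=k_1\varphi(N)$. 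Dividing and rewriting $(l_1,k_2)=l_1/(l_1,l_2)$ yields
\[
e'(\Gamma,G)=\frac{N(l_1,k_2)}{k_1\,\ra(l_1)}\,2^{|\pi(k_2)|}=\frac{l_1N}{k_1(l_1,l_2)\ra(l_1)}\cdot2^{|\pi(k_2)|},
\]
as claimed.

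The main obstacle will be the bookkeeping in assembling $P$: one must check that Cases I, II, V and VI cover every variable of $\mathfrak{V}$ exactly once, correctly distinguishing the free variables from those determined by others, and treating $(d',i')$ as a single bundled count rather than two independent ones. The passage $e'=P/|\au(\Gamma)|$ also needs the (routine but essential) verification that every fiber of the map from monomorphisms to admissible subgroups has size exactly $|\au(\Gamma)|$, which rests on $\Gamma\cong\Gamma^*$ and the clean computation of $|\au(\Gamma)|$ via the coprime direct-factor decomposition.
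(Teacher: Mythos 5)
Your proposal is correct and follows essentially the same route as the paper's own proof: multiply the counts from Cases I, II, V, VI (after discarding $j'=0$ via the regularity analysis), divide by $|\au(\Gamma)|$ since the monomorphisms with a fixed regular image form a single $\au(\Gamma)$-orbit, and simplify using the coprime factorization $N=(k_1,l_2)(l_1,l_2)(k_1,k_2)(l_1,k_2)$. Your write-up is in fact slightly more explicit than the paper's, spelling out the multiplicative identities and the computation $|\au(\mi_{k_1,l_1})|=k_1\varphi(k_1)\varphi(l_1)=k_1\varphi(N)$ that the paper uses silently.
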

\begin{proof}
From the above discussion it is evident that $j'=1$. 
Thus to determine the total number of regular embeddings we have to multiply the number of possibilities obtained in
Cases \textbf{I,II,V,VI} and divide it by $\au(\Gamma)$. Indeed, if $\Phi_1(\Gamma)=\Phi_2(\Gamma)$ for two different 
embeddings $\Phi_1,\Phi_2$ then $\Phi_1^{-1}\Phi_2$ is an automorphism of $\Gamma$. Also if $\xi$ is an automorphism
of $\Phi(\Gamma)$, then $\xi\Phi$ is also a regular embedding of $\Gamma$. Hence
\begin{align*}
    &e'(\mi_{k_1,l_1},\mi_{k_2,l_2})\\
    =&\dfrac{\varphi((k_1,l_2))(k_1,l_2)\varphi((l_1,l_2))(l_1,l_2)\varphi(k_1,k_2)2^{|\pi((k_1,k_2))|}(k_1,k_2)2^{|\pi((l_1,k_2))|}(l_1,k_2)^2\varphi(l_1,k_2)}{\ra((l_1,l_2))|\au(\mi_{k_1,l_1})|\ra((l_1,k_2))}\\
    =&\dfrac{\varphi(N)(l_1,k_2)N2^{|\pi(k_2)|}}{\ra(l_1)|\au(\mi_{k_1,l_1})|}\\
    =&\dfrac{\varphi(N)(l_1,k_2)N2^{|\pi(k_2)|}}{\ra(l_1)\varphi(N)k_1}\\
    =&\dfrac{l_1(l_1,k_2)2^{|\pi(k_2)|}}{\ra(l_1)}\\
    =&\dfrac{l_1N}{k_1(l_1,l_2)\ra(l_1)}\cdot2^{|\pi(k_2)|}.
\end{align*}
Last equality is obvious and this finishes the proof of the proposition.
\end{proof}
\begin{proof}[\textbf{Proof of Theorem \ref{t001}}] Using Lemma \ref{l004}, we have that
\begin{align*}
    &e(\mi_{k_1,l_1},\mi_{k_2,l_2})\\
    =&\dfrac{\au(\mi_{k_1,l_1})}{\mi_{k_2,l_2}}e'(\mi_{k_1,l_1},\mi_{k_2,l_2})\\
%    =&\dfrac{\varphi(N)k_1}{\varphi(N)k_2}\cdot\dfrac{l_1N}{k_1(l_1,l_2)\ra(l_1)}\cdot2^{|\pi(k_2)|}\\
    =&\dfrac{l_1l_2}{(l_1,l_2)\ra(l_1)}\cdot2^{|\pi(k_2)|}.
\end{align*}
\end{proof}
\begin{remark}\label{r001}
The case where $l_1=1$ i.e. when $\mi_{k_1,l_1}\cong\D_{2N}$, does not need the assumption that $\ra(N)$ is a Burnside number.
\end{remark}

\section{Further results}
\subsection{Non-classical Dihedral Hopf-Galois structures}
Now consider the group $\D_{2k}\times\C_l$ where $kl=N,(k,l)\neq 1$. We show that $\D_{2N}\not\hookrightarrow\h(\D_{2k}\times\C_l)$. We will need the following lemma
\begin{lemma}\cite[Theorem $3.2$]{bi}\label{l006}
Let $G=H\times K$, where $H$ and $K$ have no common direct factor. Then
\[\au(G)=\left\{\begin{pmatrix}
    A&B\\C&D
\end{pmatrix}\Bigg\vert\begin{tabular}{cc}
     $A\in\au(H)$&$B\in\textup{Hom}(K,Z(H))$  \\
     $C\in\textup{Hom}(H,Z(K))$&$D\in\au(K)$ 
\end{tabular}\right\}.\]
\end{lemma}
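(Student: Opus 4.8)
The plan is to realise every endomorphism of $G=H\times K$ in the displayed matrix form through the projection and inclusion maps, and then to use the Krull--Schmidt uniqueness theorem together with the no-common-direct-factor hypothesis to force the diagonal entries to be automorphisms and the off-diagonal entries to land in the centres.

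First I would fix the inclusions $\iota_H\colon H\to G$, $\iota_K\colon K\to G$ and the projections $\pi_H\colon G\to H$, $\pi_K\colon G\to K$, and to any $\alpha\in\au(G)$ attach the four component maps $A=\pi_H\alpha\iota_H$, $B=\pi_H\alpha\iota_K$, $C=\pi_K\alpha\iota_H$, $D=\pi_K\alpha\iota_K$, so that $\alpha(h,k)=(A(h)B(k),\,C(h)D(k))$ for all $(h,k)\in G$. Each of $A,B,C,D$ is a homomorphism, and since $(h,1)$ and $(1,k)$ commute in $G$ their images under $\alpha$ commute, which forces $A(h)B(k)=B(k)A(h)$ and $C(h)D(k)=D(k)C(h)$ for all $h,k$; equivalently $\textup{im}\,B$ lies in the centraliser $C_H(\textup{im}\,A)$ and $\textup{im}\,C$ lies in $C_K(\textup{im}\,D)$. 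Conversely any quadruple of homomorphisms satisfying these centralising conditions assembles into an endomorphism of $G$, so the content of the lemma is to identify which quadruples give \emph{automorphisms}.

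The main step is to show that for $\alpha\in\au(G)$ the diagonal maps $A$ and $D$ are automorphisms of $H$ and $K$, and this is exactly where the hypothesis is used. Since $\alpha$ is an automorphism, $\alpha(H)$ and $\alpha(K)$ are normal with $\alpha(H)\cap\alpha(K)=1$ and $\alpha(H)\alpha(K)=G$, giving a second decomposition $G=\alpha(H)\times\alpha(K)=H\times K$. Decomposing $H$ and $K$ into directly indecomposable factors (the groups here are finite, so Krull--Schmidt applies), the indecomposable factors of $\alpha(H)\cong H$ coincide up to isomorphism with those of $H$, while by hypothesis no indecomposable factor of $H$ is isomorphic to one of $K$. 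Applying the exchange form of Krull--Schmidt to swap in the $\alpha(H)$-factors, which realise precisely the factor-types of $H$ and are disjoint from those of $K$, I would conclude $G=\alpha(H)\times K$. Then $\alpha(H)$ is a complement to $K$, so $\pi_H$ restricts to an isomorphism $\alpha(H)\to H$ (its kernel $\alpha(H)\cap K$ is trivial and $|\alpha(H)|=|H|$), and since $\alpha\iota_H\colon H\to\alpha(H)$ is an isomorphism, $A=\pi_H\alpha\iota_H\in\au(H)$; the symmetric argument gives $D\in\au(K)$.

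Once $A$ is onto, the centralising condition yields $\textup{im}\,B\subseteq C_H(\textup{im}\,A)=C_H(H)=Z(H)$, so $B\in\textup{Hom}(K,Z(H))$, and symmetrically $C\in\textup{Hom}(H,Z(K))$, which is the asserted shape. Finally I would check that the assignment $\alpha\mapsto\left(\begin{smallmatrix}A&B\\C&D\end{smallmatrix}\right)$ is a bijection onto the displayed set: composition of automorphisms matches the matrix product read with the group operations, and conversely any matrix with $A\in\au(H)$, $D\in\au(K)$ and off-diagonal entries into the centres is invertible because it is triangular modulo the centre, hence comes from an automorphism. The main obstacle is the Krull--Schmidt exchange step of the previous paragraph, namely controlling the interaction of the two direct decompositions of $G$ so as to deduce $G=\alpha(H)\times K$; this is precisely the conclusion that fails when $H$ and $K$ share a direct factor, and it is the only place the hypothesis is genuinely needed.
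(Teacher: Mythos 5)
The paper gives no proof of this lemma at all: it is quoted directly from Bidwell \cite{bi}, Theorem 3.2, so there is no internal argument to compare yours against, and your proposal has to be judged on its own. Its first half is fine: the component decomposition $\alpha(h,k)=(A(h)B(k),C(h)D(k))$, the commutation constraints coming from $[(h,1),(1,k)]=1$, and the Krull--Schmidt exchange argument giving $G=\alpha(H)\times K$, hence $A\in\au(H)$, $D\in\au(K)$, and then $\textup{im}\,B\subseteq C_H(\textup{im}\,A)=Z(H)$, $\textup{im}\,C\subseteq Z(K)$, are all correct for finite groups. This establishes the inclusion $\au(G)\subseteq\{\text{matrices of the displayed form}\}$.

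The converse inclusion, however, has a genuine gap, and your closing claim that the no-common-factor hypothesis is needed \emph{only} in the Krull--Schmidt step is false. Your justification --- such a matrix ``is invertible because it is triangular modulo the centre'' --- only shows that the induced map on $G/Z(G)$ is bijective (it is in fact diagonal there, induced by $\bar A$ and $\bar D$), hence that $\ker\alpha\subseteq Z(G)$; it says nothing about injectivity on $Z(G)$ itself. That residual injectivity is exactly where the hypothesis must be invoked a second time, and it cannot be waved away: take $H=K=\Z_p$ and all four entries equal to the identity. Then $A,D$ are automorphisms, $B,C$ map into the centres, the map is vacuously ``diagonal modulo the centre'' since $G$ is abelian, yet $(h,k)\mapsto(hk,hk)$ is not injective. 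So any argument for the converse that never re-uses the hypothesis proves a false statement. A correct route is to factor
$\begin{pmatrix}A&B\\C&D\end{pmatrix}=\begin{pmatrix}1&BD^{-1}\\0&1\end{pmatrix}\begin{pmatrix}A''&0\\C&D\end{pmatrix}$
with $A''(h)=A(h)\bigl(BD^{-1}C(h)\bigr)^{-1}$; the unitriangular factor is clearly an automorphism, so everything reduces to showing $A''\in\au(H)$, i.e.\ that perturbing an automorphism of $H$ by a homomorphism $H\to Z(H)$ which factors through $K$ preserves bijectivity. This can be done with Fitting's lemma (such a perturbation of the identity is a normal endomorphism, so the kernel of a high power of it is a direct factor of $H$) together with a construction exhibiting that kernel, via its factorization through $K$, as isomorphic to a direct factor of $K$ --- whence it is trivial by hypothesis. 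Without an argument of this kind the converse half, and with it the asserted equality of the two sets, remains unproven.
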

\begin{corollary}\label{c002}
The group $\h(\D_{2k}\times\C_l)$ where $kl=N$ is odd ,$(k,l)\neq 1$ does not have any element of order $p^{v_p(N)}$ where $p|(k,l)$.
\end{corollary}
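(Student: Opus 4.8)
The plan is to reduce $\h(G)$, with $G=\D_{2k}\times\C_l$, to a direct product of two holomorphs and then bound the exponent of its Sylow $p$-subgroup. First I would apply Lemma \ref{l006} with $H=\D_{2k}$ and $K=\C_l$. Since $k$ is odd we have $Z(\D_{2k})=\{e\}$, so $\textup{Hom}(K,Z(H))=0$ forces the block $B=0$; since the abelianization of $\D_{2k}$ is $\Z_2$ and $l$ is odd, $\textup{Hom}(\D_{2k},\C_l)=\textup{Hom}(\Z_2,\C_l)=0$ forces $C=0$. Moreover $\D_{2k}$ and $\C_l$ share no nontrivial direct factor, because any common factor would be a nontrivial cyclic group contained in $Z(\D_{2k})=\{e\}$. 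Hence $\au(G)\cong\au(\D_{2k})\times\au(\C_l)$, the two factors act coordinatewise, and therefore $\h(G)\cong\h(\D_{2k})\times\h(\C_l)$.

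Next I would observe that in a direct product an element of order $p^{v_p(N)}$ forces one of the two coordinates to have order exactly $p^{v_p(N)}$, since the order is the least common multiple of the coordinate orders and each of these is a $p$-power. Thus it suffices to show that the exponent of the Sylow $p$-subgroup of each factor (write $\exp_p$ for this quantity) is strictly smaller than $p^{v_p(N)}$. Setting $a=v_p(k)\geq 1$ and $b=v_p(l)\geq 1$, both positive because $p\mid(k,l)$, we have $v_p(N)=a+b$, and I claim $\exp_p\h(\D_{2k})=p^{a}$ and $\exp_p\h(\C_l)=p^{b}$, both $<p^{a+b}$.

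For the computation I would use that $\au(\D_{2k})\cong\h(\Z_k)$ (item $(2)$ of Section \ref{re}) together with the standing hypothesis that $\ra(N)$ is Burnside, which guarantees $p\nmid\varphi(k')$ and $p\nmid\varphi(l')$ for the prime-to-$p$ parts $k',l'$ of $k,l$. Consequently every $p$-element has trivial reflection coordinate, via the homomorphism $\h(\D_{2k})\to\Z_2$ collapsing the rotation subgroup, and its rotation/translation coordinate lies in the $p$-part $\Z_{p^a}$ (resp.\ $\Z_{p^b}$). Writing such a $p$-element as $(r^x,\psi)$ with $\psi(r)=r^\beta$, the holomorph power formula gives $(r^x,\psi)^n=(r^{x f_\beta(n)},\psi^n)$, where $f_\beta$ is as in Lemma \ref{l001}. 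Taking $n=p^a$ and invoking Corollary \ref{c001}, which yields $p^{a}\mid f_\beta(p^{a})$ because $\beta\equiv 1\bmod p$, shows $(r^x,\psi)^{p^a}$ is the identity, so $\exp_p\h(\D_{2k})=p^a$; the identical argument for $\h(\C_l)$ with $n=p^b$ gives $\exp_p\h(\C_l)=p^b$. Combining, the Sylow $p$-subgroup of $\h(G)$ has exponent $p^{\max(a,b)}<p^{a+b}=p^{v_p(N)}$, which is exactly the assertion.

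The main obstacle is the sharp exponent bound $\exp_p\h(\D_{2k})=p^a$. The naive estimate coming from the extension $1\to\Z_{p^a}\to P\to Q\to 1$ (where $P$ is the Sylow $p$-subgroup and $Q$ its image in $\au(\D_{2k})$) only gives $\exp_p\leq p^{2a}$, which fails to beat $p^{a+b}$ when $b\leq a$. Getting the exact value requires the $p$-adic valuation of $f_\beta(p^a)$ supplied by Corollary \ref{c001}, and it is essential to first exploit the Burnside condition to discard the reflection coordinate and the prime-to-$p$ contributions; without these reductions the power formula $(r^x,\psi)^n=(r^{xf_\beta(n)},\psi^n)$ would not close up at $n=p^a$.
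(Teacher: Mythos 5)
Your proposal is correct, and its first half is exactly the paper's argument: both of you invoke Lemma \ref{l006} to kill the off-diagonal blocks (the same two vanishing-Hom observations, up to relabelling of $H$ and $K$) and conclude $\h(\D_{2k}\times\C_l)\cong\h(\D_{2k})\times\h(\C_l)$. The divergence is in the second half. The paper's proof stops with the bare assertion that ``none of $\h(\D_{2k}),\h(\C_l)$ has elements of order $p^{v_p(N)}$,'' offering no justification, whereas you actually prove this exponent bound via the reduction of a $p$-element to the $p$-primary block, the power formula $(r^x,\psi)^n=(r^{xf_\beta(n)},\psi^n)$, and Corollary \ref{c001}. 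This is real content, not bookkeeping: as you observe, order or crude extension estimates cannot give the bound, since the $p$-part of $|\h(\D_{2k})\times\h(\C_l)|$ always meets or exceeds $p^{v_p(N)}$. More importantly, your proof makes visible that the Burnside hypothesis on $\ra(N)$, which you import as a ``standing hypothesis'' even though it is absent from the statement of Corollary \ref{c002}, is genuinely indispensable: the corollary is false without it. For instance take $p=3$, $k=57=3\cdot 19$, $l=3$, so $N=171$, $v_3(N)=2$, and $(k,l)=3\neq 1$; since $19\equiv 1\pmod 9$, there is $u\in\Z_{57}^*$ of multiplicative order $9$, and the automorphism $\zeta$ of $\D_{114}$ with $\zeta(r)=r^u$, $\zeta(s)=s$ yields the element $(e,\zeta)$ of order $9=3^{v_3(N)}$ in $\h(\D_{114})\leq\h(\D_{114}\times\C_3)$. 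Here $\ra(171)=57$ fails to be Burnside precisely because $3\mid 19-1$. So your reading and your argument constitute the correct completion (and implicit correction) of the paper's proof, rather than a mere replication of it.

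One small step you should make explicit: to conclude that $(r^x,\psi)^{p^a}$ is the identity you need $\psi^{p^a}=\mathrm{id}$ in $\au(\D_{2k})\cong\h(\Z_k)$ as well, and $\psi=\zeta_{\beta,v}$ carries a translation coordinate $v$ in addition to $\beta$. This follows from ingredients you already have, since $\psi^{p^a}=\zeta_{\beta^{p^a},\,vf_\beta(p^a)}$, the Burnside condition forces the Sylow $p$-subgroup of $\Z_k^*$ to be cyclic of order $p^{a-1}$ (so $\beta^{p^a}=1$), and Corollary \ref{c001} gives $p^a\mid f_\beta(p^a)$ while $v$ lies in the $p$-part of $\Z_k$; but as written you only apply Corollary \ref{c001} to the rotation coordinate, so this deserves a sentence.
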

\begin{proof}
Setting $G=\D_{2k},H=\C_l$, we observe that
\begin{enumerate}
    \item $\textup{Hom}(K,Z(H))=1$. Indeed $Z(H)=\C_l$ is a group of odd order, it has no element of order $2$.
    \item $\textup{Hom}(H,Z(K))=1$ since $Z(K)$ is trivial. 
\end{enumerate}
This implies that
\begin{align*}
    \h(\D_{2k}\times\C_l)&=\D_{2k}\times\C_l\rtimes_{id}\au(\D_{2k}\times\C_l)\\
    &\cong \h(\D_{2k})\times\h(\C_l).
\end{align*}
Since none of $\h(\D_{2k}),\h(\C_l)$ has elements of order $p^{v_p(N)}$, the result follows.
\end{proof}
\begin{corollary}\label{c003}
If $N$ is odd then $e(\D_{2N},\D_{2k}\times\C_l)=0$,
whenever $(k,l)\neq 1$.
\end{corollary}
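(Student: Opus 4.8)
The plan is to reduce the vanishing of $e(\D_{2N},\D_{2k}\times\C_l)$ to a statement about element orders in the holomorph, and then to invoke Corollary \ref{c002}. By Lemma \ref{l004} (Byott's translation) we have
\[
e(\D_{2N},\D_{2k}\times\C_l)=\frac{|\au(\D_{2N})|}{|\au(\D_{2k}\times\C_l)|}\,e'(\D_{2N},\D_{2k}\times\C_l),
\]
so it suffices to prove that $e'(\D_{2N},\D_{2k}\times\C_l)=0$, i.e.\ that there is no regular subgroup of $\h(\D_{2k}\times\C_l)$ isomorphic to $\D_{2N}$. In fact I would prove the stronger statement that $\h(\D_{2k}\times\C_l)$ contains no subgroup isomorphic to $\D_{2N}$ at all, regular or not; this is cleaner, since it sidesteps any discussion of the stabilizer condition and needs only that an embedding exists.

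Next I would exploit the cyclic part of $\D_{2N}$. Since $(k,l)\neq 1$, choose a prime $p\mid(k,l)$; then $p\mid N$, so $p^{v_p(N)}\mid N$. The group $\D_{2N}$ contains the rotation subgroup $\langle r\rangle\cong\Z_N$, and the power $r^{N/p^{v_p(N)}}$ has order exactly $p^{v_p(N)}$. Any injective homomorphism $\D_{2N}\hookrightarrow\h(\D_{2k}\times\C_l)$ preserves element orders, so its existence would force $\h(\D_{2k}\times\C_l)$ to contain an element of order $p^{v_p(N)}$ with $p\mid(k,l)$. This directly contradicts Corollary \ref{c002}, and hence no such embedding can exist.

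Consequently $e'(\D_{2N},\D_{2k}\times\C_l)=0$, and the displayed identity then gives $e(\D_{2N},\D_{2k}\times\C_l)=0$, as claimed. I do not expect any genuine obstacle here: once Corollary \ref{c002} is available the argument is a one-line order obstruction, and the only point needing minor care is the elementary observation that an embedding preserves orders, so that the order-$N$ element of $\D_{2N}$ propagates into the holomorph and collides with the forbidden valuation $p^{v_p(N)}$.
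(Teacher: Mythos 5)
Your proof is correct and follows essentially the same route as the paper: the paper's Corollary \ref{c003} is exactly the order-obstruction consequence of Corollary \ref{c002}, since the paper's stated goal in that subsection is to show $\D_{2N}\not\hookrightarrow\h(\D_{2k}\times\C_l)$, which kills all regular subgroups isomorphic to $\D_{2N}$ and hence forces $e'(\D_{2N},\D_{2k}\times\C_l)=0$ and, via Lemma \ref{l004}, $e(\D_{2N},\D_{2k}\times\C_l)=0$. Your explicit observation that the rotation $r^{N/p^{v_p(N)}}$ has order $p^{v_p(N)}$ for a prime $p\mid(k,l)$ is precisely the element the paper's argument relies on.
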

\begin{corollary}\label{c004}
Let $L/K$ be a finite Galois extension with Galois group isomorpic to $\D_{2N}$ where $N$ is odd. Then the
number of Hopf-Galois structures on $L/K$ is at least
\begin{align*}
\displaystyle{\sum\limits_{M=0}^N2^M\chi(N-M)},
\end{align*}
where $\chi(L)$ is the coefficient of $x^L$ in the polynomial $\prod\limits_{p_u\in\pi(N)} (x+p_u^{\alpha_u})$.
\end{corollary}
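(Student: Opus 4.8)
The plan is to express the total count as a sum of the type-by-type numbers $e(\D_{2N},G)$ supplied by Theorem \ref{t001}, and then to repackage that sum as the stated polynomial quantity. Since $e(\Gamma,G)$ is by definition the number of Hopf-Galois structures of type $G$, the total number of structures on $L/K$ is $\sum_G e(\D_{2N},G)$, the sum running over isomorphism classes of groups $G$ of order $2N$. I would first restrict attention to the types of the form $\mi_{k,l}$ with $kl=N$. Because every $e(\D_{2N},G)\ge 0$, discarding all other types can only lower the total, and this is precisely what produces the ``at least'' in the statement: a group of order $2N$ with $N$ odd always has a normal subgroup of order $N$ (the index-two kernel of the sign of the regular representation), so it is of the form $H\rtimes\Z_2$, and the types omitted from my bound are exactly those with $H$ a \emph{noncyclic} group of order $N$, which exist whenever $N$ fails to be a cyclic number.

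Next I would prune the $\mi_{k,l}$ that contribute nothing. By Corollary \ref{c003}, $e(\D_{2N},\D_{2k}\times\C_l)=0$ whenever $(k,l)\neq 1$, so only the coprime factorizations $N=kl$ with $(k,l)=1$ survive. These are exactly the groups $\Z_N\rtimes_\phi\Z_2$ with cyclic kernel, and they are indexed bijectively by subsets $S\subseteq\pi(N)$: the involution $\phi$ acts as $-1$ on the factors $\Z_{p_u^{\alpha_u}}$ with $p_u\in S$ and as $+1$ on the rest, giving dihedral part $k=\prod_{p_u\in S}p_u^{\alpha_u}$ and cyclic part $l=\prod_{p_u\notin S}p_u^{\alpha_u}$ (the endpoints $S=\emptyset$ and $S=\pi(N)$ recovering $\cy\cong\C_{2N}$ and $\D_{2N}$).

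With this indexing I would write $\D_{2N}=\mi_{N,1}$, i.e. $k_1=N,\ l_1=1$, and substitute into Theorem \ref{t001}; by Remark \ref{r001} no Burnside hypothesis is needed when $l_1=1$, and the formula collapses to $e(\D_{2N},\mi_{k,l})=l\cdot 2^{|\pi(k)|}$. Hence the lower bound is $\sum_{S\subseteq\pi(N)}2^{|S|}\prod_{p_u\notin S}p_u^{\alpha_u}$. Expanding $\prod_{p_u\in\pi(N)}(x+p_u^{\alpha_u})=\sum_{S\subseteq\pi(N)}x^{|S|}\prod_{p_u\notin S}p_u^{\alpha_u}$ identifies the coefficient $\chi(M)$ of $x^M$ with $\sum_{|S|=M}\prod_{p_u\notin S}p_u^{\alpha_u}$, so collecting the subsets $S$ by their cardinality $M=|S|$ rewrites the bound as $\sum_M 2^M\chi(M)=\prod_{p_u\in\pi(N)}(2+p_u^{\alpha_u})$, namely the asserted polynomial evaluated at $x=2$.

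The algebra in the last paragraph is routine bookkeeping; the genuinely delicate point is the ``at least.'' Here I would have to confirm two things: that distinct coprime factorizations yield pairwise non-isomorphic $\mi_{k,l}$, so no type is double-counted, and that the summands I drop are honest isomorphism types of order $2N$ contributing a nonnegative (and in general positive) amount. The latter is exactly where equality can fail, since the presence of a noncyclic group of order $N$ forces extra types beyond the $\mi_{k,l}$; pinning down precisely which isomorphism types of order $2N$ occur, and verifying that the cyclic-kernel ones are captured in full by Theorem \ref{t001} while the rest are simply bounded below by $0$, is the main obstacle I anticipate.
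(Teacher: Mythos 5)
Your argument is correct and is, in substance, exactly the derivation the paper intends (the corollary is stated there without proof): specialize Theorem \ref{t001} to $\Gamma=\D_{2N}=\mi_{N,1}$, invoke Remark \ref{r001} to drop the Burnside hypothesis when $l_1=1$, sum $e(\D_{2N},\mi_{k,l})=l\cdot 2^{|\pi(k)|}$ over the coprime factorizations $N=kl$ indexed by subsets of $\pi(N)$, and bound the contribution of every remaining isomorphism type of order $2N$ below by zero, with Corollary \ref{c003} confirming that the non-coprime products $\D_{2k}\times\C_l$ contribute nothing. One discrepancy must be flagged, however: what you actually prove is the bound $\sum_{M}2^{M}\chi(M)=\prod_{p_u\in\pi(N)}(2+p_u^{\alpha_u})$, and this is \emph{not} the displayed quantity $\sum_{M=0}^{N}2^{M}\chi(N-M)$, which with the paper's definition of $\chi$ equals $\sum_{L}2^{N-L}\chi(L)$. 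The literal formula in the statement is in fact false as a lower bound: for $N=3$ it gives $2^{2}\chi(1)+2^{3}\chi(0)=28$, whereas a Galois extension with group $\D_{6}\cong S_3$ has exactly $5=3+2$ Hopf--Galois structures, which is precisely what your formula yields. So the index shift $\chi(N-M)$ is a misprint in the paper that your proof silently corrects; rather than asserting that your sum ``is'' the asserted polynomial quantity, you should state explicitly that the corollary should read $\sum_{M}2^{M}\chi(M)$, i.e.\ the stated polynomial evaluated at $x=2$, and note that your derivation establishes this corrected form.
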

\subsection{Skew-braces}
\begin{definition}
A left skew brace is a triple $(\Gamma,+,\times)$ where $(\Gamma,+),(\Gamma,\times)$ are groups and satisfy
\[a\times(b+c)=(a\times b)+a^{-1}+(a\times c),\] for all $a,b,c\in\Gamma$.
\end{definition}
Skew braces give non-degenerate set theoretic solutions of the Yang-Baxter equation. It initially appeared in 
the PhD thesis of D. Bachiller and has been studied in \cite{bc}, \cite{cj} et cetera. Skew braces provide group theoretic and ring theoretic methods to understand solutions of the Yang Baxter equations. We will need 
the following result which connects the skew braces and regular subgroups.
\begin{lemma}\cite[Proposition $A.3$]{sv}\label{l007}
Let $\Gamma$ be a group. There exists a bijective correspondence between isomorphism classes of 
skew braces with additive group isomorphic to $\Gamma$ and classes of regular subgroups of $\h(\Gamma)$ 
under conjugation by elements of $\au(\Gamma)$.
\end{lemma}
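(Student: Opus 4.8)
The plan is to establish the bijection via the standard $\lambda$-map construction and then, as a separate step, to match skew-brace isomorphism with $\au(\Gamma)$-conjugacy of regular subgroups. Throughout I write the additive group as $(\Gamma,+)$ with identity $e$, recall that $\h(\Gamma)=\Gamma\rtimes\au(\Gamma)$ acts on the set $\Gamma$ by $(x,\sigma)\cdot y=x+\sigma(y)$, and note that on the identity this reads $(x,\sigma)\cdot e=x$ because $\sigma(e)=e$.

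First I would build the forward map. Given a skew brace $(\Gamma,+,\times)$, define $\lambda_a\colon\Gamma\to\Gamma$ by the defining relation $a\times b=a+\lambda_a(b)$. Using the brace identity one checks that each $\lambda_a$ is an additive automorphism and that $a\mapsto\lambda_a$ is a homomorphism $(\Gamma,\times)\to\au(\Gamma)$; this is the only place the axiom is really used, and it is a short verification rather than a long calculation. Then $a\mapsto(a,\lambda_a)$ is a homomorphism $(\Gamma,\times)\to\h(\Gamma)$, since its first coordinate reproduces $a\times b=a+\lambda_a(b)$ and its second coordinate is the homomorphism property of $\lambda$. It is injective, so its image $G$ has cardinality $|\Gamma|$; because $(a,\lambda_a)\cdot e=a$, the orbit of $e$ is all of $\Gamma$, and a transitive action of a group of order $|\Gamma|$ on a set of size $|\Gamma|$ is regular. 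Hence $G$ is a regular subgroup of $\h(\Gamma)$.

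Next I would construct the inverse. Given a regular $G\le\h(\Gamma)$, the orbit map $G\to\Gamma$, $g\mapsto g\cdot e$, is a bijection; moreover each $g\in G$ is forced to have the shape $(a,\sigma_a)$ with $a=g\cdot e$, so the translation part is exactly the orbit coordinate. Transport the group law of $G$ to $\Gamma$ by declaring $a\times b=(g_ag_b)\cdot e$, where $g_a=(a,\sigma_a)$. Since $g_ag_b=(a+\sigma_a(b),\sigma_a\sigma_b)$, this gives $a\times b=a+\sigma_a(b)$ together with $\sigma_{a\times b}=\sigma_a\sigma_b$, and the brace identity $a\times(b+c)=(a\times b)+a^{-1}+(a\times c)$ then follows immediately from additivity of $\sigma_a$. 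The two assignments are visibly mutually inverse: the $\sigma_a$ recovered from the subgroup is precisely the $\lambda_a$ of the brace, and conversely. The conceptual point is that the brace axiom and the closure of $G$ under multiplication inside $\h(\Gamma)$ are literally the same statement.

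The step needing care is matching the equivalence relations, and this is the main obstacle — it is pure bookkeeping of translation versus automorphism parts. I would show that a skew-brace isomorphism between two structures sharing the additive group $(\Gamma,+)$ is exactly an element $f\in\au(\Gamma)$, and that conjugating a regular subgroup by $(e,f)\in\h(\Gamma)$ sends $(a,\sigma_a)$ to $(f(a),f\sigma_a f^{-1})$. A direct computation then yields $f(a)\times' f(c)=f(a)+f\sigma_a f^{-1}\bigl(f(c)\bigr)=f(a\times c)$, so $f$ is the corresponding brace isomorphism, and every such isomorphism arises this way. Finally, since the statement refers to braces whose additive group is merely \emph{isomorphic} to $\Gamma$, I would observe that an identification of the additive group with $\Gamma$ is unique up to $\au(\Gamma)$; thus passing to isomorphism classes of braces corresponds exactly to passing to $\au(\Gamma)$-conjugacy classes of regular subgroups of $\h(\Gamma)$, which is the asserted bijection.
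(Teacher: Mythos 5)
The paper does not actually prove this lemma: it is quoted directly from \cite[Proposition A.3]{sv} (the Byott--Vendramin appendix), and your argument reproduces essentially the proof given in that source --- the map $a\mapsto(a,\lambda_a)$ with $\lambda_a(b)=-a+(a\times b)$ in one direction, transport of the group structure of a regular subgroup along the orbit bijection $g\mapsto g\cdot e$ in the other, and the identification of brace isomorphisms with conjugation by $(e,f)$, $f\in\au(\Gamma)$. Your write-up is correct, including the two points that are easiest to fumble: that regularity forces the translation coordinate of each $g\in G$ to be $g\cdot e$, so the subgroup is exactly a graph $\{(a,\sigma_a)\}$ and the brace axiom becomes closure of $G$ under the product in $\h(\Gamma)$; and that the passage from additive group \emph{equal} to $(\Gamma,+)$ to additive group merely \emph{isomorphic} to $\Gamma$ is precisely what turns brace isomorphism into $\au(\Gamma)$-conjugacy of regular subgroups.
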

\begin{corollary}\label{c005}
If $(\Gamma,+)\cong\mi_{k_1,l_1}$ and $(\Gamma,\times)\cong\mi_{k_2,l_2}$, where $k_1l_1=k_2l_2=N$ is an odd number and $\ra(N)$ is a Burnside number 
then the number of skew braces of the form $(\Gamma,+,\times)$ is given by
\begin{align*}
    \dfrac{l_1N}{k_1(l_1,l_2)\ra(l_1)}\cdot2^{|\pi(k_2)|}.
\end{align*}
\begin{proof}
Follows from Proposition \ref{p001}.
\end{proof}
\end{corollary}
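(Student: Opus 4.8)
The plan is to deduce the count formally from the regular-subgroup enumeration already carried out in Proposition \ref{p001}, using the brace-theoretic dictionary of Lemma \ref{l007}. Recall that Lemma \ref{l007} identifies isomorphism classes of skew braces whose additive group is isomorphic to a fixed group with the orbits, under conjugation by its automorphism group, of the regular subgroups of the associated holomorph. So the first move is to put $(\Gamma,+)\cong\mi_{k_1,l_1}$ and read off that the skew braces in question correspond to regular subgroups of $\h(\mi_{k_1,l_1})$, taken up to $\au(\mi_{k_1,l_1})$-conjugacy, whose isomorphism type is $\mi_{k_2,l_2}$; indeed, under this correspondence the isomorphism type of the regular subgroup is exactly the circle group $(\Gamma,\times)$, so imposing $(\Gamma,\times)\cong\mi_{k_2,l_2}$ simply restricts to that isomorphism class.

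The second step is to recognize this restricted count as the quantity evaluated in Proposition \ref{p001}, namely
\[
\frac{l_1N}{k_1(l_1,l_2)\ra(l_1)}\cdot 2^{|\pi(k_2)|}.
\]
Once the two enumerations are matched there is nothing further to compute: the formula in the statement is literally the output of Proposition \ref{p001}, and the corollary is immediate.

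The crux, and the one place where genuine care is required, is precisely this matching, which hides two pieces of bookkeeping. First, Lemma \ref{l007} passes to orbits under $\au(\mi_{k_1,l_1})$, whereas $e'$ is a plain count of regular subgroups; I would reconcile these by revisiting the proof of Proposition \ref{p001}, where the number of regular embeddings was divided by $|\au(\mi_{k_1,l_1})|$, and arguing that dividing out by this automorphism action is exactly the passage to conjugacy classes demanded by Lemma \ref{l007}. Second, and more delicately, the holomorph entering Lemma \ref{l007} is built from the \emph{additive} group $\mi_{k_1,l_1}$, while Proposition \ref{p001} enumerates regular subgroups inside $\h(\mi_{k_2,l_2})$; bridging these two a priori different holomorphs is the real content, and I expect it to be the main obstacle. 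I would settle it by leaning on the explicit parametrization of the embeddings $\Phi\colon\mi_{k_1,l_1}\to\h(\mi_{k_2,l_2})$ from Section \ref{re}, together with a Byott-type translation identifying $\au(\mi_{k_1,l_1})$-conjugacy classes of regular copies of $\mi_{k_2,l_2}$ in one holomorph with the regular copies of $\mi_{k_1,l_1}$ counted in the other; checking that this identification is a bijection, so that each skew-brace isomorphism class is represented by exactly one of the subgroups counted in Proposition \ref{p001}, is the only nontrivial verification, after which the result follows.
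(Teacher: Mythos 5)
You are right that the corollary is meant to follow by combining Lemma \ref{l007} with Proposition \ref{p001} (the paper's entire proof is the sentence ``Follows from Proposition \ref{p001}''), and you are also right that the two pieces of bookkeeping you isolate are exactly where the content lies. The problem is that neither of your proposed resolutions is available. First, the division by $|\au(\mi_{k_1,l_1})|$ in the proof of Proposition \ref{p001} is \emph{not} the passage to conjugacy classes: it converts the number of regular embeddings $\Phi\colon\mi_{k_1,l_1}\to\h(\mi_{k_2,l_2})$ into the number of regular subgroups, because each subgroup is the image of exactly $|\au(\mi_{k_1,l_1})|$ embeddings. The quotient demanded by Lemma \ref{l007} is a further, different quotient, by the conjugation action of the automorphism group of the \emph{additive} group on subgroups, and it cannot be computed by dividing by any constant since orbits need not have full size (the stabilizer of a regular subgroup is the automorphism group of the corresponding skew brace). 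Second, the bridge between the two holomorphs does not exist: Byott's translation (Lemma \ref{l004}) is a bijection between sets of embeddings, yielding the numerical relation $e(\Gamma,G)=\frac{|\au(\Gamma)|}{|\au(G)|}e'(\Gamma,G)$; it does not descend to a bijection between $\au(\mi_{k_1,l_1})$-classes of regular copies of $\mi_{k_2,l_2}$ inside $\h(\mi_{k_1,l_1})$ and regular copies of $\mi_{k_1,l_1}$ inside $\h(\mi_{k_2,l_2})$.

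Indeed, no identification of the kind you hope for can exist, as the smallest case $N=3$ shows. Take $\cy\cong\Z_6$ and $\di\cong S_3$. Then $\h(\cy)\cong D_{12}$ contains exactly one regular subgroup isomorphic to $\di$, hence one $\au(\cy)$-class, so there is exactly one skew brace with $(\Gamma,+)\cong\cy$ and $(\Gamma,\times)\cong\di$. On the other hand, $\h(\di)\cong S_3\times S_3$ contains six regular subgroups isomorphic to $\cy$ (generated by the maps $x\mapsto axb^{-1}$ with $\{|a|,|b|\}=\{2,3\}$), and these fall into exactly two $\au(\di)$-classes (note: two orbits of size three, so even here dividing by $|\au(\di)|=6$ gives the wrong answer). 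The three counts $1$, $2$, $6$ are pairwise distinct, so neither your proposed bijection nor the claimed equality of enumerations can hold. This also shows the gap is not merely in your write-up: the formula in the corollary is literally $e'(\mi_{k_1,l_1},\mi_{k_2,l_2})$, which for $(\Gamma,+)\cong\cy$, $(\Gamma,\times)\cong\di$ gives $6$, whereas the number of isomorphism classes of such skew braces is $1$. What Proposition \ref{p001} actually counts is the number of skew-brace operations $\circ$ on the fixed group $\mi_{k_2,l_2}$, taken as additive group, with $(\mi_{k_2,l_2},\circ)\cong\mi_{k_1,l_1}$; converting that into the isomorphism-class count asserted in the corollary requires an orbit count under $\au(\mi_{k_2,l_2})$-conjugation that neither your proposal nor the paper's one-line proof carries out.
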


\section{Appendix: Equations}\label{ap}
All the notations here are adopted from section \ref{re}. Since $\Phi$ is a homomorphism, we must have the following relations:
\begin{align*}
    \Phi(r_1)^{k_1}&=e_0\\
    \Phi(s_1)^2&=e_0\\
    \Phi(t_1)^{l_1}&=e_0\\
    \Phi(s_1)\Phi(r_1)\Phi(s_1)\Phi(r_1)&=e_0\\
    \Phi(r_1)\Phi(t_1)&=\Phi(t_1)\Phi(r_1)\\
    \Phi(s_1)\Phi(t_1)&=\Phi(t_1)\Phi(s_1),
\end{align*}
where \[e_0=\left(\begin{pmatrix}1&0\\0&1\end{pmatrix},r_2^{0}s_2^{0},\begin{pmatrix}1&0\\0&1\end{pmatrix}\right)\] is the identity element of $\h(\mi_{k_2,l_2})$. 
First we observe that if $j=1$, then $\Phi(r_1)$ has even order. Indeed
\begin{align*}
    &\Phi(r_1)^{2}=\left(\begin{pmatrix}b^{2}&a(1+b)\\0&1\end{pmatrix},r_2^{i(1-d)-c},\begin{pmatrix}d^2&c(1+d)\\0&1\end{pmatrix}\right)\\
    \implies&\Phi(r_1)^{2m+1}=\left(\begin{pmatrix}b^{2m+1}&{a}(1+b+\cdots+b^{2m})\\0&1\end{pmatrix},r_2^{\bar{i}}s,\begin{pmatrix}d^{2m+1}&c(1+d+\cdots+d^{2m})\\0&1\end{pmatrix}\right).
\end{align*}
Since $k_1$ is odd, this possibility does not arise. Similarly we can conclude that $j''=0$, since $l_1$ is odd. Using $\Phi(r_1)^{k_1}=e_0$ we have
\begin{align*}
    &\left(\begin{pmatrix}b^{k_1}&{a}(1+b+\cdots+b^{k_1-1})\\0&1\end{pmatrix},r_2^{i(1+d+\cdots+d^{k_1-1})}s,\begin{pmatrix}d^{k_1}&c(1+d+\cdots+d^{k_1-1})\\0&1\end{pmatrix}\right)\\
    =&\left(\begin{pmatrix}1&0\\0&1\end{pmatrix},r_2^{0}s_2^{0},\begin{pmatrix}1&0\\0&1\end{pmatrix}\right),
\end{align*}
which implies that

\begin{align}
    b^{k_1}&=1\mod{l_2}\label{e001}\\
    {a}(1+b+\cdots+b^{k_1-1})&=0\mod{l_2}\label{e002}\\
    i(1+d+\cdots+d^{k_1-1})&=0\mod{k_2}\label{e003}\\
    d^{k_1}&=1\mod{k_2}\label{e004}\\
    c(1+d+\cdots+d^{k_1-1})&=0\mod{k_2}\label{e005}.
\end{align}
Using $\Phi(t_1)^{l_1}=e_0$ we get that
\begin{align}
    (b'')^{l_1}&=1\mod{l_2}\label{e006}\\
    {a''}(1+b''+\cdots+(b'')^{l_1-1})&=0\mod{l_2}\label{e007}\\
    i(1+d''+\cdots+(d'')^{l_1-1})&=0\mod{k_2}\label{e008}\\
    (d'')^{l_1}&=1\mod{k_2}\label{e009}\\
    c''(1+d''+\cdots+(d'')^{l_1-1})&=0\mod{k_2}\label{e010}.
\end{align}
Using $\Phi(r_1)\Phi(t_1)=\Phi(t_1)\Phi(r_1)$ we get that
\begin{align}
    a(1-b'')&=0\mod l_2\label{e011}\\
    i(1-d'')&=0\mod k_2\label{e012}\\
    c(1-d'')&=0\mod k_2\label{e013}.
\end{align}
 Now we divide the set of equations in two parts considering $j'=0$ and $j'=1$.
 \subsection{Case 1: $j'=0$} 
Using $\Phi(s_1)^2=e_0$ we have
\begin{align}
    (b')^2&=1\mod l_2\label{e014}\\
    a'(1+b')&=0\mod l_2\label{e015}\\
    (d')^2&=1\mod k_2\label{e016}\\
    c'(1+d')&=0\mod k_2\label{e017}\\
    i'(1+d')&=0\mod k_2\label{e018}.
\end{align}
Using $\Phi(s_1)\Phi(r_1)\Phi(s_1)\Phi(r_1)=e_0$ we have
\begin{align}
    b^2&=1\mod l_2\label{e019}\\
    a(b+b')+a'(1+bb')&=0\mod l_2\label{e020}\\
    (i+i')(1+d')&=0\mod k_2\label{e021}\\
    d^2&=1\mod k_2\label{e022}\\
    c(d+d')+c'(1+dd')&=0\mod k_2\label{e023}
\end{align}
Note that $b=1,d=1$ by subsection $4.1$.

Using $\Phi(s_1)\Phi(t_1)=\Phi(t_1)\Phi(s_1)$ we have
\begin{align}
a''(1-b')&=a'(1-b'')\mod{l_2}\label{e024}\\
i''(1-d')&=i'(1-d'')\mod{k_2}\label{e025}\\
c''(1-d')&=c'(1-d'')\mod{k_2}\label{e026}.
\end{align}
 \subsection{Case 2: $j'=1$} 
Using $\Phi(s_1)^2=e_0$ we have
\begin{align}
    (b')^2&=1\mod l_2\label{e027}\\
    a'(1+b')&=0\mod l_2\label{e028}\\
    (d')^2&=1\mod k_2\label{e029}\\
    c'(1+d')&=0\mod k_2\label{e030}\\
    i'(1-d')&=c'\mod k_2\label{e031}.
\end{align}
Using $\Phi(s_1)\Phi(r_1)\Phi(s_1)\Phi(r_1)=e_0$ we have
\begin{align}
    b^2&=1\mod l_2\label{e032}\\
    a(b+b')+a'(1+bb')&=0\mod l_2\label{e033}\\
    (i+i')(1-d')&=d'c+c'\mod k_2\label{e034}\\
    d^2&=1\mod l_2\label{e035}\\
    c(d+d')+c'(1+dd')&=0\mod l_2\label{e036}
\end{align}
Note that $b=1,d=1$ by subsection $4.1$.

Using $\Phi(s_1)\Phi(t_1)=\Phi(t_1)\Phi(s_1)$ we have
\begin{align}
a''(1-b')&=a'(1-b'')\mod{l_2}\label{e037}\\
i'-i''d'&=i''+i'd''+c''\mod{k_2}\label{e038}\\
c''(1-d')&=c'(1-d'')\mod{k_2}\label{e039}.
\end{align}

\end{document}